\theoremstyle{plain}
\newtheorem{theorem}[subsection]{Theorem}
\newtheorem{proposition}[subsection]{Proposition}
\newtheorem{lemma}[subsection]{Lemma}
\newtheorem{corollary}[subsection]{Corollary}
\theoremstyle{definition}
\newtheorem{definition}[subsection]{Definition}
\theoremstyle{remark}
\newtheorem{remark}[subsection]{Remark}
\newtheorem{example}[subsection]{Example}
\newcommand{\Ima}{\ensuremath{\mathsf{Im}}}
\newcommand{\Ker}{\ensuremath{\mathsf{Ker}}}
\newcommand{\Der}{\ensuremath{\mathsf{Der}}}
\newcommand{\QDer}{\ensuremath{\mathsf{QDer}}}
\newcommand{\IDer}{\ensuremath{\mathsf{IDer}}}
\newcommand{\GenDer}{\ensuremath{\mathsf{GenDer}}}
\newcommand{\End}{\ensuremath{\mathsf{End}}}
\newcommand{\id}{\ensuremath{\mathsf{Id}}}
\newcommand{\Lie}{\ensuremath{\mathsf{Lie}}}
\newcommand{\Leib}{\ensuremath{\mathsf{Leib}}}
\newcommand{\RR}{\ensuremath{\mathsf{RR}}}
\newcommand{\LL}{\ensuremath{\mathsf{LL}}}
\newcommand{\QQ}{\ensuremath{\mathsf{Q}}}
\newcommand{\Hom}{\ensuremath{\mathsf{Hom}}}
\newcommand{\Lieh}{\ensuremath{\mathfrak{h}}}
\newcommand{\Lieg}{\ensuremath{\mathfrak{g}}}
\newcommand{\Liem}{\ensuremath{\mathfrak{m}}}
\newcommand{\Lien}{\ensuremath{\mathfrak{n}}}
\newcommand{\eh}{\ensuremath{\mathfrak{h}}}
\newcommand{\ZLie}{\ensuremath{Z^{\Lie}}}
\newcommand{\K}{\ensuremath{\mathbb{K}}}
\begin{document}
	
\title{On some properties of $\mathsf{Lie}$-centroids of Leibniz algebras}
	
\author{J.M.~Casas}	
\author{X.~Garc\'ia-Mart\'inez}
\author{N.~Pacheco-Rego}
	
\email{jmcasas@uvigo.es}	
\email{xabier.garcia.martinez@uvigo.gal}
\email{natarego@gmail.com}

\address[Jos\'e Manuel Casas]{Universidade de Vigo, Departamento de Matem\'atica Aplicada I, E.\ E.\ Forestal, Campus de Pontevedra, E--36005 Pontevedra, Spain}
\address[Xabier García-Martínez]{Universidade de Vigo, Departamento de Matem\'aticas, Esc.\ Sup.\ de Enx.\ Inform\'atica, Campus de Ourense, E--32004 Ourense, Spain\newline
	and\newline
	Faculty of Engineering, Vrije Universiteit Brussel, Pleinlaan 2, B--1050 Brussel, Belgium}
\address[Natalia Pacheco-Rego]{Instituto Polit\'ecnico do C\'avado e Ave, 4750-810 Vila Frescainha, S. Martinho, Barcelos, Portugal}
	
\thanks{This work was supported by Ministerio de Econom\'ia y Competitividad (Spain), with grant number PID2020-115155GB-I00. The second author is a Postdoctoral Fellow of the Research Foundation--Flanders (FWO)}

\begin{abstract}
	 We study some properties on $\mathsf{Lie}$-centroids related to central $\mathsf{Lie}$-derivations, generalized $\mathsf{Lie}$-derivations and almost inner $\mathsf{Lie}$-derivations. We also determine the $\mathsf{Lie}$-centroid of the tensor product of a commutative associative algebra and a Leibniz algebra.
\end{abstract}

\subjclass[2010]{17A32; 17A36}

\keywords{$\mathsf{Lie}$--central derivations; $\mathsf{Lie}$-centroid, generalized $\mathsf{Lie}$-derivation, quasi-$\mathsf{Lie}$-centroid, almost inner-$\mathsf{Lie}$-derivations}

\maketitle


\section{Introduction}

In the semi-abelian categories context, classical commutatory theory deveolped by Higgins and Huq~\cite{Hig, Huq} studies how far are objects of being abelian. Replacing \emph{abelian} by any Birkhoff subcategory, we obtain relative commutator theory. This was first investigated following the lines of relative central extensions of Fröhlich~\cite{Fro} and Janelidze and Kelly's categorical Galois theory~\cite{Jan, JaKe} (see also~\cite{EVDL, Fur, Lue}), and further developed by Everaert and Van der Linden, among others~\cite{CVDL1, EvVdL, Eve}.

The category of Lie algebras forms a Birkhoff subcategory of the category of Leibniz algebras. This means that we can study relative commutator theory o Leibniz algebras respect to Lie, giving rise to interesting developments in the comprehension of both algebraic structures~\cite{BC, BC1, CI, CKh, RC}. Moreover, the interplay between these two categories can be somehow tricky, since it is also possible to find Leibniz algebras as a subcategory of a certain type of Lie algebras~\cite{LoPi2}, although many interesting categorical properties are not preserved~\cite{GaGr, GTVV, GaVa2, GaVa}.

The study of the properties of the centroid of a Lie algebra has been a key step in the classification of finite-dimensional extended affine Lie algebras~\cite{BN, Ne}. For finite-dimensional simple associative algebras, centroids are essential in the investigation of Brauer groups, division algebras and derivations. The theory of centroids in other algebraic structures such as Jordan algebras, superalgebras, $n$-Lie algebras, Zinbiel algebras, among others, can be found, for instance in~\cite{Mc, Ni, Ri, BM, ZL}.

In the recent manuscript~\cite{BCP}, it was introduced the concept of centroid of a Leibniz algebra with respect to the Liesation functor, named $\Lie$-centroid, together with the study of the interplay between $\Lie$-central derivations, $\Lie$-centroids and $\Lie$-stem Leibniz algebras.

Our goal in this paper is to continue with the analysis of properties of the $\Lie$-centroid introduced in~\cite{BCP} and study its interaction with $\Lie$--central derivations, generalised $\Lie$-derivations, quasi-$\Lie$-centroids and almost inner $\Lie$-derivations, establishing a parallelism between the absolute results (classical properties for Lie algebras) with the results obtained. We note that not all classical results are immediately translated into the relative case, so new requirements are needed.

The manuscript is organised as follows: Section~\ref{preliminaries} contains the necessary notions relative to the Liesation functor. In Section~\ref{centroids} we review some properties of $\Lie$--central derivations and $\Lie$-centroids and we obtain some new results on $\Lie$-centroids. In Section~\ref{generalised} we introduce the notions of generalised derivations and quasi-centroids relative to the Liesation functor and we analyse their connections with $\Lie$-centroids.
An important problem in the absolute context tries to determine the conditions under which the central deviations coincide with the almost inner derivations. Our goal in Section~\ref{almost} is the study of this problem in the relative context. The most relevant fact is that we need additional conditions to characterise these conditions. We conclude our study analysing the $\Lie$-centroid of the tensor product of a commutative associative algebra and a Leibniz algebra in Section~\ref{tensor}.

\section{Preliminaries on Leibniz algebras} \label{preliminaries}

Let $\K$ be a fixed ground field such that $\frac{1}{2} \in \K$. Throughout the paper, all vector spaces and tensor products are considered over $\K$.

A \emph{Leibniz algebra}~\cite{Lo2, Lo1} is a vector space ${\Lieg}$ equipped with a bilinear map $[-,-] \colon \Lieg \times \Lieg \to \Lieg$, satisfying the \emph{Leibniz identity}:
\[
 \big[x,[y,z]\big]= \big[[x,y],z\big]-\big[[x,z],y\big], \quad x, y, z \in \Lieg.
\]

A subalgebra ${\eh}$ of a Leibniz algebra ${\Lieg}$ is said to be a \emph{left (resp.~right) ideal} of ${\Lieg}$ if~$[h,g]\in \eh$ (resp.~$ [g,h]\in \eh$), for all $h \in \eh$, $g \in {\Lieg}$. If $\eh$ is both
left and right ideal, then $\eh$ is called a \emph{two-sided ideal} of ${\Lieg}$. In this case $\Lieg/\Lieh$ naturally inherits a Leibniz algebra structure.

Given a Leibniz algebra ${\Lieg}$, we denote by ${\Lieg}^{\textrm{ann}}$ the subspace of ${\Lieg}$ spanned by all elements of the form $[x,x]$, $x \in \Lieg$. It is clear that the quotient ${\Lieg}_{\Lie}=\Lieg/{\Lieg}^{\rm ann}$ is a Lie algebra. This procedure defines the so-called \emph{Liesation functor} $(-)_{\Lie} \colon {\Leib} \to {\Lie}$, which assigns to a Leibniz algebra $\Lieg$ the Lie algebra ${\Lieg}_{\Lie}$. Moreover, the canonical surjective homomorphism ${\Lieg} \twoheadrightarrow {\Lieg}_{\Lie}$ is universal among all homomorphisms from $\Lieg$ to a Lie algebra, implying that the Liesation functor is left adjoint to the inclusion functor $ {\Lie} \hookrightarrow {\Leib}$.

Given a Leibniz algebra $\Lieg$, we define the bracket
\[
[-,-]_{\Lie}\colon\Lieg\to \Lieg, ~~ \text{by} ~~ [x,y]_{\Lie}=[x,y]+[y,x], ~~ \text{for} ~~ x,y\in\Lieg.
\]

Let ${\Liem}$, ${\Lien}$ be two-sided ideals of a Leibniz algebra ${\Lieg}$. The following notions come from~\cite{CKh}, which were derived from~\cite{CVDL1}.

 The \emph{$\Lie$-commutator} of ${\Liem}$ and ${\Lien}$ is the two-sided ideal of $\Lieg$
\[
[\Liem,\Lien]_{\Lie}= \langle \{[m,n]_{\Lie}, m \in \Liem, n \in \Lien \}\rangle.
\]

The \emph{$\Lie$-centre} of the Leibniz algebra $\Lieg$ is the two-sided ideal
\[
\ZLie(\Lieg) = \{ z\in \Lieg \mid \text{$[x,z]_{\Lie}=0$ for all $x\in \Lieg$}\}.
\]

 The \emph{\Lie-centraliser} of ${\Liem}$ and ${\Lien}$ over ${\Lieg}$ is
\[
C_{\Lieg}^{\Lie}({\Liem} , {\Lien}) = \{x \in {\Lieg} \mid [x, m]_{\Lie} \in {\Lien}, \; \text{for all} \;
m \in {\Liem} \} \; .
\]
When $\Lien= 0$, we denote it by $C_{\Lieg}^{\Lie}(\Liem)$. Obviously, $C_{\Lieg}^{\Lie}(\Lieg)=\ZLie(\Lieg)$.

The \emph{right-centre} of $\Lieg$ is the two-sided ideal
\[
	Z^r(\Lieg) = \{ z \in \Lieg\mid [x,z]=0 ~\text{for all}~x \in \Lieg \},
\]
whereas the \emph{left-centre} of a Leibniz algebra $\Lieg$ is the set
\[
Z^l(\Lieg) = \{ z \in \Lieg\mid [z,x]=0 ~\text{for all}~x \in \Lieg \},
\]
which might not even be a subalgebra. The \emph{centre} of $\Lieg$ is the two-sided ideal obtained by $Z(\Lieg) = Z^l(\Lieg) \cap Z^r(\Lieg)$.

Let ${\Lien}$ be a two-sided ideal of a Leibniz algebra $\Lieg$. The \emph{lower $\Lie$-central series} of~$\Lieg$ relative to ${\Lien}$ is the sequence
\[
\cdots \trianglelefteq {\gamma_i^{\Lie}(\Lieg,\Lien)} \trianglelefteq \cdots \trianglelefteq \gamma_2^{\Lie}(\Lieg,\Lien) \trianglelefteq {\gamma_1^{\Lie}(\Lieg,\Lien)}
\]
of two-sided ideals of $\Lieg$ defined inductively by
\[
{\gamma_1^{\Lie}(\Lieg,\Lien)} = {\Lien} \quad \text{and} \quad \gamma_i^{\Lie}(\Lieg,\Lien) =[{\gamma_{i-1}^{\Lie}(\Lieg,\Lien)},{\Lieg}]_{\Lie}, \quad i \geq 2.
\]
We use the notation $\gamma_i^{\Lie}(\Lieg)$ instead of $\gamma_i^{\Lie}(\Lieg,\Lieg), 1 \leq i \leq n$.
The Leibniz algebra $\Lieg$ is said to be \emph{$\Lie$-nilpotent relative to $\Lien$ of class $c$} if\ $\gamma_{c+1}^{\Lie}(\Lieg, \Lien) = 0$ and $\gamma_c^{\Lie}(\Lieg, \Lien) \neq 0$.

\begin{remark}\label{R:identity}
	Note that from the Leibniz identity we can deduce that~$\big[x, [y, z]_{\Lie}\big] = 0$ is also an identity. This means that it is not interesting at all to study $\Lie$-solvability, since the third ideal will always be trivial.
\end{remark}

\section{\texorpdfstring{$\Lie$-Central derivations and $\Lie$-centroids}{Lie-Central derivations and Lie-centroids}} \label{centroids}

In this section we recall some notions and results from~\cite{BCP} and we provide some new results concerning $\Lie$--central derivations and $\Lie$-centroids.

\begin{definition} A linear map $d \colon \Lieg \rightarrow \Lieg$ of a Leibniz algebra $\Lieg$ is said to be a \emph{$\Lie$-derivation} if for all $x, y \in \Lieg$, the following condition holds:
\[
	d([x,y]_{\Lie})=[d(x),y]_{\Lie} + [x, d(y)]_{\Lie}
\]
\end{definition}

We denote by $\Der^{\Lie}(\Lieg)$ the set of all $\Lie$-derivations of~$\Lieg$ which can be equipped with a structure of Lie algebra by means of the usual bracket
\[
[d_1, d_2] = d_1 \circ d_2 - d_2 \circ d_1, \text{ for all } d_1, d_2 \in \Der^{\Lie}(\Lieg).
\]

\begin{example}\label{derivation1}\
\begin{enumerate}
\item[(a)] Absolute derivations of a Leibniz algebra $\Lieg$, i.e., linear maps $d \colon {\Lieg} \to {\Lieg}$ such that $d([x,y]) = [d(x), y]+[x,d(y)]$, for $x, y \in {\Lieg}$, are also $\Lie$-derivations.

\item[(b)] If $\Lieg$ is a Lie algebra, then every linear map $d \colon {\Lieg} \to {\Lieg}$ is a $\Lie$-derivation.

\item[(c)] Let $\Lieg$ be the two-dimensional Leibniz algebra with basis $\{e, f\}$ and bracket operation $[e, f]=e$ and zero elsewhere (see~\cite{Cu}). The linear maps $d \colon {\Lieg} \to {\Lieg}$ represented by a matrix of the form $\left( \begin{array}{cc} a & 0 \\ 0 & 0 \end{array} \right)$ are $\Lie$-derivations.

 \item[(d)] Let $\Lieg$ be the two-dimensional Leibniz algebra with basis $\{e, f\}$ and bracket operation $[f, f]= \lambda e,~ \lambda \in \K/\K^2$, and zero elsewhere (see~\cite{Cu}). The linear maps $d \colon {\Lieg} \to {\Lieg}$ represented by a matrix of the form $\left( \begin{array}{cc} 2a & b \\ 0 & a \end{array} \right)$ are $\Lie$-derivations, but it is not an absolute derivation.
\end{enumerate}
\end{example}

\begin{definition}[\cite{BCP}] \label{Lie central der}
A $\Lie$-derivation $d \colon \Lieg \to \Lieg$ of a Leibniz algebra $\Lieg$ is said to be a \emph{$\Lie$--central derivation} if its image is contained in the $\Lie$-centre of $\Lieg$.
\end{definition}

We denote the set of all $\Lie$--central derivations of a Leibniz algebra $\Lieg$ by $\Der_z^{\Lie}(\Lieg)$. Obviously $\Der_z^{\Lie}(\Lieg)$ is a subalgebra of $\Der^{\Lie}(\Lieg)$ and every element of $\Der_z^{\Lie}(\Lieg)$ annihilates $\gamma_2^{\Lie}(\Lieg)$. Moreover, ${\Der}_z^{\Lie}({\Lieg}) = C_{{\Der}^{\Lie}({\Lieg})}\big((\RR + \LL)({\Lieg})\big)$, where $\LL({\Lieg})$ is formed by $L_x$, the left multiplication operators $L_x(y) = [x, y]$; and $\RR ({\Lieg})$ by its right counterparts $R_x$.

\begin{example}
The $\Lie$-derivation given in Example~\ref{derivation1}~(c) is not a $\Lie$--central derivation, except in the case $a = 0$. The $\Lie$-derivation given in Example~\ref{derivation1}~(d) is a $\Lie$--central derivation when $a=0$.
\end{example}

\begin{definition}[\cite{BCP}]
The \emph{$\Lie$-centroid} of a Leibniz algebra $\Lieg$ is the set of all linear maps $d \colon \Lieg \to \Lieg$ satisfying the identities
\[
	d([x,y]_{\Lie}) = [d(x), y]_{\Lie} = [x, d(y)]_{\Lie},
\]
for all $x, y \in \Lieg$. We denote this set by $\Gamma^{\Lie}(\Lieg)$.
\end{definition}

\begin{example}\label{derivation}\
\begin{enumerate}
\item[(a)] If $\Lieg$ is a Lie algebra, then every linear map $d \colon {\Lieg} \to {\Lieg}$ is a $\Lie$-centroid.

\item[(b)] Let $\Lieg$ be the two-dimensional Leibniz algebra with basis $\{e, f\}$ and bracket operation $[e, f]=e$ and zero elsewhere (see~\cite{Cu}). The $\Lie$-centroid of $\Lieg$ are the linear maps $d \colon {\Lieg} \to {\Lieg}$ represented by the matrix of the form $\left( \begin{array}{cc} a & 0 \\ 0 & a \end{array} \right)$.

 \item[(c)] Let $\Lieg$ be the two-dimensional Leibniz algebra with basis $\{e, f\}$ and bracket operation $[f, f]= \lambda e,~ \lambda \in \K/\K^2$, and zero elsewhere (see~\cite{Cu}). The linear maps $d \colon {\Lieg} \to {\Lieg}$ represented by a matrix of the form $\left( \begin{array}{cc} a & b \\ 0 & a \end{array} \right)$ are $\Lie$-centroids.
\end{enumerate}
\end{example}

\begin{proposition}[{\cite[Proposition 4.2]{BCP}}] \label{intersection}
For any Leibniz algebra $\Lieg$, $\Gamma^{\Lie}(\Lieg)$ is a subalgebra of $\End_{\K}({\Lieg})$ such that $\Der_z^{\Lie}(\Lieg) = \Der^{\Lie}(\Lieg) \cap \Gamma^{\Lie}(\Lieg)$.
\end{proposition}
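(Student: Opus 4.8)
The plan is to prove both assertions by unwinding the defining identities directly; no deep input is needed beyond the definitions of $\Der^{\Lie}(\Lieg)$, $\Gamma^{\Lie}(\Lieg)$ and $\ZLie(\Lieg)$, together with the easy but crucial observation that the bracket $[-,-]_{\Lie}$ is symmetric, so that $z\in\ZLie(\Lieg)$ in fact forces $[z,x]_{\Lie}=[x,z]_{\Lie}=0$ for all $x\in\Lieg$.

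First I would check that $\Gamma^{\Lie}(\Lieg)$ is a subalgebra of $\End_{\K}(\Lieg)$. Stability under linear combinations is immediate from the bilinearity of $[-,-]_{\Lie}$. For closure under composition, take $f,g\in\Gamma^{\Lie}(\Lieg)$ and $x,y\in\Lieg$: applying the centroid identity for $g$ and then for $f$ gives $(f\circ g)([x,y]_{\Lie})=f\big([g(x),y]_{\Lie}\big)=[(f\circ g)(x),y]_{\Lie}$, and symmetrically, using $g([x,y]_{\Lie})=[x,g(y)]_{\Lie}$, one obtains $(f\circ g)([x,y]_{\Lie})=[x,(f\circ g)(y)]_{\Lie}$. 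Hence $f\circ g\in\Gamma^{\Lie}(\Lieg)$, so $\Gamma^{\Lie}(\Lieg)$ is an associative subalgebra of $\End_{\K}(\Lieg)$ (and a fortiori a Lie subalgebra under the commutator bracket).

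Next I would establish the equality $\Der_z^{\Lie}(\Lieg)=\Der^{\Lie}(\Lieg)\cap\Gamma^{\Lie}(\Lieg)$ by a double inclusion. For ``$\subseteq$'': if $d\in\Der_z^{\Lie}(\Lieg)$ then $d\in\Der^{\Lie}(\Lieg)$ by definition, and since $\Ima(d)\subseteq\ZLie(\Lieg)$ the symmetry of $[-,-]_{\Lie}$ gives $[d(x),y]_{\Lie}=0=[x,d(y)]_{\Lie}$ for all $x,y$; feeding this into the $\Lie$-derivation identity also yields $d([x,y]_{\Lie})=0$, so the three expressions in the centroid identity all vanish and $d\in\Gamma^{\Lie}(\Lieg)$. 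For ``$\supseteq$'': if $d\in\Der^{\Lie}(\Lieg)\cap\Gamma^{\Lie}(\Lieg)$, comparing $d([x,y]_{\Lie})=[d(x),y]_{\Lie}+[x,d(y)]_{\Lie}$ with $d([x,y]_{\Lie})=[d(x),y]_{\Lie}$ gives $[x,d(y)]_{\Lie}=0$ for all $x,y\in\Lieg$, i.e.\ $\Ima(d)\subseteq\ZLie(\Lieg)$, whence $d\in\Der_z^{\Lie}(\Lieg)$.

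There is no serious obstacle here: every step is a one-line manipulation. If anything, the only point requiring a little care is the symmetry of $[-,-]_{\Lie}$ — it is this that makes the $\Lie$-centre effectively ``two-sided'' and lets a $\Lie$-central derivation annihilate both $[d(x),y]_{\Lie}$ and $[x,d(y)]_{\Lie}$ simultaneously; without it the ``$\subseteq$'' inclusion would not go through so cleanly.
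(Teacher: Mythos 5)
Your argument is correct: the subalgebra closure and both inclusions follow exactly as you write them, and the key observation that the symmetry of $[-,-]_{\Lie}$ makes $\ZLie(\Lieg)$ annihilate on both sides is precisely what makes the inclusion $\Der_z^{\Lie}(\Lieg)\subseteq\Gamma^{\Lie}(\Lieg)$ work. Note that the paper itself gives no proof here --- it cites the result from \cite[Proposition 4.2]{BCP} --- and your direct verification is the standard argument one would find there.
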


\begin{theorem}
	Let $\{ e_k \}$ be a basis of ${\Der}_z^{\Lie}({\Lieg})$ and $\{ \varphi_j \}$ be a maximal subset of~$\Gamma^{\Lie}({\Lieg})$ such that $\{ \varphi_{j \mid \gamma_2^{\Lie}(\Lieg)} \}$ is linearly independent and let $\Psi$ denote the subspace spanned by $\{ \varphi_j \}$. Then $\{ e_k, \varphi_j \}$ is a basis of $\Gamma^{\Lie}({\Lieg})$ and $\Gamma^{\Lie}({\Lieg}) = {\Der}_z^{\Lie}({\Lieg}) \oplus \Psi$.
\end{theorem}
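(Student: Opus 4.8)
The plan is to study the restriction map
$\rho \colon \Gamma^{\Lie}(\Lieg) \to \Hom_{\K}\big(\gamma_2^{\Lie}(\Lieg), \Lieg\big)$, $d \mapsto d_{\mid \gamma_2^{\Lie}(\Lieg)}$, which is clearly $\K$-linear, and to identify its kernel with $\Der_z^{\Lie}(\Lieg)$. Granting that identification, the statement follows from elementary linear algebra together with the maximality hypothesis on $\{\varphi_j\}$.

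The first and only substantive step is the claim $\Ker \rho = \Der_z^{\Lie}(\Lieg)$. One inclusion is immediate: by the remarks following Definition~\ref{Lie central der} every $\Lie$--central derivation annihilates $\gamma_2^{\Lie}(\Lieg)$, and $\Der_z^{\Lie}(\Lieg) \subseteq \Gamma^{\Lie}(\Lieg)$ by Proposition~\ref{intersection}, so $\Der_z^{\Lie}(\Lieg) \subseteq \Ker \rho$. For the converse, let $d \in \Gamma^{\Lie}(\Lieg)$ with $d_{\mid \gamma_2^{\Lie}(\Lieg)} = 0$. Since $[x,y]_{\Lie} \in \gamma_2^{\Lie}(\Lieg) = [\Lieg,\Lieg]_{\Lie}$ for all $x, y \in \Lieg$, the $\Lie$-centroid identities yield $[d(x),y]_{\Lie} = [x,d(y)]_{\Lie} = d([x,y]_{\Lie}) = 0$. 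The vanishing of $[d(x),y]_{\Lie}$ for all $y$ says $d(x) \in \ZLie(\Lieg)$, so $\Ima d \subseteq \ZLie(\Lieg)$; and $d([x,y]_{\Lie}) = 0 = [d(x),y]_{\Lie} + [x,d(y)]_{\Lie}$ shows $d$ is a $\Lie$-derivation. Hence $d \in \Der_z^{\Lie}(\Lieg)$, proving $\Ker \rho = \Der_z^{\Lie}(\Lieg)$.

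Next I would exploit the maximality of $\{\varphi_j\}$. By hypothesis $\{\rho(\varphi_j)\}$ is linearly independent in $\Ima \rho$; if some $\rho(d)$, $d \in \Gamma^{\Lie}(\Lieg)$, were not in its span, then $\{\varphi_j\} \cup \{d\}$ would still have linearly independent restrictions, contradicting maximality. Thus $\{\rho(\varphi_j)\}$ is a basis of $\Ima \rho$. In particular the $\varphi_j$ are linearly independent and $\rho$ is injective on $\Psi = \mathrm{span}\{\varphi_j\}$, so $\Psi \cap \Der_z^{\Lie}(\Lieg) = \Psi \cap \Ker \rho = 0$. Given any $d \in \Gamma^{\Lie}(\Lieg)$, write $\rho(d) = \sum_j c_j \rho(\varphi_j) = \rho\big(\sum_j c_j \varphi_j\big)$; then $d - \sum_j c_j \varphi_j \in \Ker \rho = \Der_z^{\Lie}(\Lieg)$, giving $\Gamma^{\Lie}(\Lieg) = \Der_z^{\Lie}(\Lieg) + \Psi$. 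Together with the trivial intersection, $\Gamma^{\Lie}(\Lieg) = \Der_z^{\Lie}(\Lieg) \oplus \Psi$.

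Finally, since $\{e_k\}$ is a basis of $\Der_z^{\Lie}(\Lieg)$ and $\{\varphi_j\}$ a basis of $\Psi$, the union $\{e_k, \varphi_j\}$ is linearly independent and spans the direct sum, hence is a basis of $\Gamma^{\Lie}(\Lieg)$. I expect the only real obstacle to be the lemma $\Ker \rho = \Der_z^{\Lie}(\Lieg)$, and within it the key observation that a $\Lie$-centroid map killing $\gamma_2^{\Lie}(\Lieg)$ is automatically a $\Lie$-derivation with central image; the rest is routine bookkeeping with the restriction map. One should also note that the argument is dimension-free and so applies verbatim whether or not $\{\varphi_j\}$ is finite.
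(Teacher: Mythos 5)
Your proof is correct and follows essentially the same route as the paper: both arguments rest on restricting centroid elements to $\gamma_2^{\Lie}(\Lieg)$ and observing that a $\Lie$-centroid element vanishing there is automatically a $\Lie$--central derivation, followed by the same linear-algebra bookkeeping. The only difference is presentational — you package the argument via the kernel of the restriction map $\rho$ and spell out the ``routine computation'' that the paper leaves implicit, which is a welcome clarification but not a different proof.
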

\begin{proof}
Since $\{ \varphi_{j \mid \gamma_2^{\Lie}(\Lieg)} \}$ is linearly independent, then $\{ \varphi_{j} \}$ is linearly independent in $\Gamma^{\Lie}({\Lieg})$. Moreover, by construction and \cite[Proposition 4.2]{BCP}, $\{ e_k, \varphi_j \}$ is a linearly independent set in $\Gamma^{\Lie}({\Lieg})$.
For $\varphi \in \Gamma^{\Lie}({\Lieg})$, since $\{ \varphi_{j \mid \gamma_2^{\Lie}(\Lieg)} \}$ is a basis of the vector space $\{ \varphi_{\mid \gamma_2^{\Lie}(\Lieg)} \mid \varphi \in \Gamma^{\Lie}({\Lieg})\}$, then $\varphi_{\mid \gamma_2^{\Lie}(\Lieg)} = \sum_{j \in J} c_j \varphi_{j \mid \gamma_2^{\Lie}(\Lieg)}$ ($J$ denotes a finite set of indexes and $c_j \in \K$, $j \in J$). Thus, for any $x, y \in {\Lieg}$, we have:
\[
	0 = \bigg( \varphi - \sum_{j \in J} c_j \varphi_{j} \bigg)([x, y]_{\Lie}) = \bigg[ \bigg(\varphi - \sum_{j \in J} c_j \varphi_{j} \bigg)(x), y \bigg]_{\Lie},
\]
i.e., $\left(\varphi - \sum_{j \in J} c_j \varphi_{j} \right)({\Lieg}) \subseteq \ZLie({\Lieg})$. Moreover, it is easy a routine computation that $\varphi - \sum_{j \in J} c_j \varphi_{j} \in {\Der}_z^{\Lie}({\Lieg})$. Consequently, $\left(\varphi - \sum_{j \in J} c_j \varphi_{j} \right) = \sum_{k \in K} b_k e_k$, and therefore $\varphi = \sum_{j \in J} c_j \varphi_{j} + \sum_{k \in K} b_k e_k$.
\end{proof}

\begin{theorem}
Let $\pi \colon \Lieg \twoheadrightarrow \Lieh$ be a surjective homomorphism of Leibniz algebras. For any $f \in \underline{{\End}}(\Lieg) = \{ g \in {\End}(\Lieg) \mid g\big(\Ker(\pi)\big) \subseteq {\Ker}(\pi) \}$ there exists a unique~${\overline{f} \in {\End}(\Lieh)}$ such that $\pi \circ f = \overline{f} \circ \pi$.

Moreover, the following statements hold:
\begin{enumerate}
\item[(i)] The homomorphism $\pi^* \colon \underline{{\End}}(\Lieg) \to {\End}(\Lieh)$, $f \mapsto \overline{f}$, satisfies the following properties:
 \begin{enumerate}
 \item[(a)] $\pi^* \big((\RR+\LL)(\Lieg)\big) = (\RR+\LL)(\Lieh)$.
 \item[(b)] $\pi^*\left(\Gamma^{\Lie}(\Lieg) \cap \underline{{\End}}(\Lieg) \right) \subseteq \Gamma^{\Lie}(\Lieh)$.
 \item[(c)] There is a homomorphism $\pi_{\Gamma^{\Lie}} \colon \Gamma^{\Lie}(\Lieg) \cap \underline{{\End}}(\Lieg) \to \Gamma^{\Lie}(\Lieh)$, $\varphi \mapsto \overline{\varphi}$.
 \item[(d)] If ${\Ker}(\pi) = \ZLie(\Lieg)$, then every $\varphi \in \Gamma^{\Lie}(\Lieg)$ leaves ${\Ker}(\pi)$ invariant, i.e., $\pi_{\Gamma^{\Lie}}$ is defined on all of $\Gamma^{\Lie}(\Lieg)$.
 \end{enumerate}
\item[(ii)] If ${\Ker}(\pi) \subseteq \ZLie(\Lieg)$, for any $\varphi \in \Gamma^{\Lie}(\Lieg) \cap \underline{{\End}}(\Lieg)$ such that $\pi_{\Gamma^{\Lie}}(\varphi) = 0$, then $\varphi\big(\gamma_2^{\Lie}(\Lieg)\big) = 0$.

\end{enumerate}
\end{theorem}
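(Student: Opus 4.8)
The first claim is the standard factorisation property of surjections. The plan is: given $h\in\Lieh$, pick any $g\in\pi^{-1}(h)$ and set $\overline f(h):=\pi(f(g))$; if $g,g'$ are two preimages of $h$ then $g-g'\in\Ker(\pi)$, so $f(g-g')\in\Ker(\pi)$ by the defining condition on $\underline{\End}(\Lieg)$, whence $\pi(f(g))=\pi(f(g'))$ and $\overline f$ is well defined. Linearity of $\overline f$ and the identity $\pi\circ f=\overline f\circ\pi$ are immediate, and uniqueness follows because $\pi$ is onto. The same surjectivity argument shows $\pi^{*}$ is multiplicative and unital, i.e.\ $\overline{f\circ g}=\overline f\circ\overline g$ and $\overline{\id}=\id$, since both sides agree after precomposition with $\pi$; in particular $\pi^{*}$ is an algebra homomorphism on all of $\underline{\End}(\Lieg)$, which is itself a subalgebra of $\End(\Lieh)$'s source.

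For (i)(a), I would first check that $L_x+R_x\in\underline{\End}(\Lieg)$: for $k\in\Ker(\pi)$ one has $\pi\big((L_x+R_x)(k)\big)=\pi\big([x,k]_{\Lie}\big)=[\pi(x),0]_{\Lie}=0$. A direct computation from $\pi\big([x,g]_{\Lie}\big)=[\pi(x),\pi(g)]_{\Lie}$ then gives $\pi^{*}(L_x+R_x)=L_{\pi(x)}+R_{\pi(x)}$, and since $\pi$ is surjective every $L_h+R_h$ with $h\in\Lieh$ arises this way, which yields the equality of images. For (i)(b), take $\varphi\in\Gamma^{\Lie}(\Lieg)\cap\underline{\End}(\Lieg)$ and $h_1,h_2\in\Lieh$ with preimages $g_1,g_2$; then
\[
\overline\varphi\big([h_1,h_2]_{\Lie}\big)=\pi\big(\varphi([g_1,g_2]_{\Lie})\big)=\pi\big([\varphi(g_1),g_2]_{\Lie}\big)=[\overline\varphi(h_1),h_2]_{\Lie},
\]
and symmetrically the same quantity equals $[h_1,\overline\varphi(h_2)]_{\Lie}$, so $\overline\varphi\in\Gamma^{\Lie}(\Lieh)$. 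Part (i)(c) is then just the restriction of $\pi^{*}$ to $\Gamma^{\Lie}(\Lieg)\cap\underline{\End}(\Lieg)$, a homomorphism by the multiplicativity noted above. For (i)(d), assume $\Ker(\pi)=\ZLie(\Lieg)$ and let $z\in\ZLie(\Lieg)$; then for any $\varphi\in\Gamma^{\Lie}(\Lieg)$ and any $x\in\Lieg$ the centroid identity gives $[x,\varphi(z)]_{\Lie}=\varphi\big([x,z]_{\Lie}\big)=0$, so $\varphi(z)\in\ZLie(\Lieg)=\Ker(\pi)$; hence $\varphi\in\underline{\End}(\Lieg)$ and $\pi_{\Gamma^{\Lie}}$ is defined on all of $\Gamma^{\Lie}(\Lieg)$.

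Finally, for (ii): the hypothesis $\pi_{\Gamma^{\Lie}}(\varphi)=\overline\varphi=0$ says that $\pi(\varphi(g))=\overline\varphi(\pi(g))=0$ for all $g\in\Lieg$, i.e.\ $\varphi(\Lieg)\subseteq\Ker(\pi)\subseteq\ZLie(\Lieg)$. Since $\gamma_2^{\Lie}(\Lieg)$ is spanned by the elements $[x,y]_{\Lie}$, it suffices to show $\varphi\big([x,y]_{\Lie}\big)=0$; the centroid identity gives $\varphi\big([x,y]_{\Lie}\big)=[\varphi(x),y]_{\Lie}$, and since $[-,-]_{\Lie}$ is symmetric this equals $[y,\varphi(x)]_{\Lie}=0$ because $\varphi(x)\in\ZLie(\Lieg)$. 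Hence $\varphi\big(\gamma_2^{\Lie}(\Lieg)\big)=0$. All the computations are routine; the only points needing care are verifying that the relevant operators land in $\underline{\End}(\Lieg)$ so that $\pi^{*}$ may be applied, and, in (ii), combining the symmetry of $[-,-]_{\Lie}$ with the centroid identity — without symmetry the centroid identity alone would only give one-sided information. I do not anticipate a genuine obstacle.
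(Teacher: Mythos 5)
Your proposal is correct and follows essentially the same route as the paper: the same factorisation argument for $\overline f$, the same computations for (i)(a)--(d), and the same reduction of (ii) to $\varphi(\Lieg)\subseteq\Ker(\pi)\subseteq\ZLie(\Lieg)$ combined with the centroid identity on generators $[x,y]_{\Lie}$ of $\gamma_2^{\Lie}(\Lieg)$. You merely fill in details the paper leaves as ``direct checking'' (multiplicativity of $\pi^{*}$, membership of $L_x+R_x$ in $\underline{\End}(\Lieg)$, and the symmetry of $[-,-]_{\Lie}$ used in (ii)).
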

\begin{proof}
The existence of $\overline{f}$ is a consequence of the following commutative diagram:
\[
\xymatrix{
0 \ar[r] & {\Ker}(\pi) \ar[r] \ar@{^(->}[d] & \Lieg \ar[r]^-{\pi} \ar[d]^f & \Lieh \ar[r] \ar@{-->}[d]^-{\overline{f}} & 0\\
0 \ar[r] & {\Ker}(\pi) \ar[r] & \Lieg \ar[r]^-{\pi} & \Lieh \ar[r] & 0
}
\]

{(i) (a)}
Let $x \in \Lieg$ and $y \in \Lieh$ such that $\pi(x) = y$. Then,
\[
	\pi^*\big((\RR+\LL)(x)\big) = \pi^*({\RR_{x}}+{\LL_{x}}) = {\overline{\RR}_{x}} + {\overline{\LL}_{x}} = {\RR_{y}} + {\LL_{y}} \in (\RR+\LL)({y}).
\]

{(b)}
Since we have the isomorphism $\Lieh \cong \Lieg/{\Ker}(\pi)$, for any $f \in \Gamma^{\Lie}(\Lieg) \cap \underline{{\End}}(\Lieg)$ we have that $\overline{f}(y) = f(x)+ {\Ker}(\pi)$ where $y = x + {\Ker}(\pi)$. Then, an easy computation shows that~$\overline{f} \in \Gamma^{\Lie}(\Lieh)$.

{(c)} Direct checking.

{(d)} For any $x \in {\Ker}(\pi) = \ZLie(\Lieg)$, we have that $[\varphi(x),y]_{\Lie} = \varphi([x,y]_{\Lie}) = 0$, for all $y \in \Lieg$, i.e., $\varphi(x) \in \ZLie(\Lieg)$.

{(ii)} Let $\varphi \in \Gamma^{\Lie}(\Lieg) \cap \underline{{\End}}(\Lieg)$ be such that $\pi_{\Gamma^{\Lie}}(\varphi) = 0$. Then, we know that~$\varphi\big(\Lieg\big) \subseteq {\Ker}(\pi) \subseteq \ZLie(\Lieg)$.
For every~$x \in \gamma_2^{\Lie}(\Lieg)$, $x = \sum_i k_i [x'_{i},x''_i]_{\Lie}$, with~$k_i \in \K, x'_{i},x''_i \in \Lieg$, we have
\[
 \varphi(x) = \sum_i k_i \varphi([x'_{i},x''_i]_{\Lie}) = \sum_i k_i [\varphi(x'_{i}),x''_i]_{\Lie} = 0. \qedhere
\]

\end{proof}

\begin{proposition} \label{3.6}
	For a Leibniz algebra $\Lieg$, the elements of its $\Lie$-centroid commute when applied to $\gamma_2^{\Lie}(\Lieg)$.
\end{proposition}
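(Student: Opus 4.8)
The plan is to reduce the statement to generators of $\gamma_2^{\Lie}(\Lieg)$ and then simply unwind the two defining identities of the $\Lie$-centroid, being careful to use whichever of the three expressions $d([x,y]_{\Lie})=[d(x),y]_{\Lie}=[x,d(y)]_{\Lie}$ is convenient at each step.

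First I would recall that $\gamma_2^{\Lie}(\Lieg)=[\Lieg,\Lieg]_{\Lie}$ is, by definition, the two-sided ideal generated by the elements $[x,y]_{\Lie}$ with $x,y\in\Lieg$; in particular it is spanned as a vector space by such elements. Since every element of $\Gamma^{\Lie}(\Lieg)$ is linear, it therefore suffices to prove, for arbitrary $\varphi,\psi\in\Gamma^{\Lie}(\Lieg)$ and arbitrary $x,y\in\Lieg$, the identity $\varphi\big(\psi([x,y]_{\Lie})\big)=\psi\big(\varphi([x,y]_{\Lie})\big)$, and then extend by linearity.

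For the core computation, I would use the centroid identity for $\psi$ in the form $\psi([x,y]_{\Lie})=[\psi(x),y]_{\Lie}$, and then apply $\varphi$, this time using its centroid identity in the form $\varphi([a,b]_{\Lie})=[a,\varphi(b)]_{\Lie}$ with $a=\psi(x)$ and $b=y$, obtaining
\[
\varphi\big(\psi([x,y]_{\Lie})\big)=\varphi\big([\psi(x),y]_{\Lie}\big)=[\psi(x),\varphi(y)]_{\Lie}.
\]
Symmetrically, using the centroid identity for $\psi$ in the form $\psi([a,b]_{\Lie})=[\psi(a),b]_{\Lie}$ and for $\varphi$ in the form $\varphi([x,y]_{\Lie})=[x,\varphi(y)]_{\Lie}$,
\[
\psi\big(\varphi([x,y]_{\Lie})\big)=\psi\big([x,\varphi(y)]_{\Lie}\big)=[\psi(x),\varphi(y)]_{\Lie}.
\]
Both expressions coincide, which gives $\varphi\psi=\psi\varphi$ on each generator $[x,y]_{\Lie}$, hence on all of $\gamma_2^{\Lie}(\Lieg)$ by linearity.

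There is essentially no obstacle here: the only point requiring a little care is to invoke the correct one of the two equalities $d([x,y]_{\Lie})=[d(x),y]_{\Lie}$ and $d([x,y]_{\Lie})=[x,d(y)]_{\Lie}$ at each application, so that the inner map lands on the first slot and the outer map on the second (or vice versa), making the two computations meet at $[\psi(x),\varphi(y)]_{\Lie}$. No use of the Leibniz identity or of Remark~\ref{R:identity} is needed beyond what is already built into the definition of $[-,-]_{\Lie}$.
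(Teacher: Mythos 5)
Your proof is correct and follows essentially the same route as the paper: write an element of $\gamma_2^{\Lie}(\Lieg)$ as a linear combination of generators $[x,y]_{\Lie}$ and use the two forms of the centroid identity to push the inner map onto one slot and the outer map onto the other, so that both compositions land on $[\psi(x),\varphi(y)]_{\Lie}$. If anything, your write-up is slightly more careful than the paper's, which elides the $\Lie$ subscripts on the generators.
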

\begin{proof}
Let $x = \sum_i k_i[x_i', x_i''] \in \gamma_2^{\Lie}(\Lieg)$ and let $\varphi$, $\psi \in \Gamma^{\Lie}(\Lieg)$. Then,
\begin{align*}
	\varphi \circ \psi (x) &= \varphi \circ \psi\big( \sum_i k_i[x_i', x_i''] \big) = \sum_i k_i \big(\varphi \circ \psi ([x_i', x_i''])\big) \\
	{} &=  \sum_i k_i \big([\varphi(x_i'), \psi(x_i'')]\big) = \sum_i k_i \big([\varphi(x_i'), \psi(x_i'')]\big) = \psi \circ \varphi (x). \qedhere
\end{align*}
\end{proof}

\begin{proposition}
	Let $\Lieg$ be a  Leibniz algebra. Then:
	\begin{enumerate}
		\item[(i)] $\gamma_2^{\Lie}(\Lieg)$ is indecomposable if and only if the only idempotents of $\Gamma^{\Lie}(\gamma_2^{\Lie}(\Lieg))$ are 0 and~$\id$.
		
		\item[(ii)] For every $\varphi \in \Gamma^{\Lie}(\Lieg)$ and any {\Lie}-invariant $\K$-bilinear form $f$ of $\Lieg$ (i.e.,~$f$ is a $\K$-bilinear form on $\Lieg$ satisfying that $f([a,c]_{\Lie}, b) + f(a, [b,c]_{\Lie})=0$, for all $a, b, c \in \Lieg$) the following equality holds for any $x \in \gamma_2^{\Lie}(\Lieg), b \in \Lieg$:
		\[
		f\big(\varphi(x), b\big) = f\big(x, \varphi(b)\big).
		\]
	\end{enumerate}
\end{proposition}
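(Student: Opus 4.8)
The plan is to treat the two parts separately, each time reducing everything to the defining identities of $\Gamma^{\Lie}$ together with Remark~\ref{R:identity}.

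For (i), write $A:=\gamma_2^{\Lie}(\Lieg)$. First I would note that since every element of $A$ is a linear combination of brackets $[y,z]_{\Lie}$, the identity $\big[x,[y,z]_{\Lie}\big]=0$ of Remark~\ref{R:identity} forces $[\Lieg,A]=0$; hence both the Leibniz bracket and the $\Lie$-bracket vanish on $A$, so $\Gamma^{\Lie}(A)=\End_{\K}(A)$ and every subspace of $A$ is a two-sided ideal. With this in hand the equivalence mirrors the classical Lie-algebra statement: if $A=I\oplus J$ with $I,J$ nonzero two-sided ideals, the projection $e\colon A\to A$ with image $I$ and kernel $J$ is an idempotent of $\Gamma^{\Lie}(A)$ other than $0$ and $\id$ (the centroid identities for $e$ follow from $[I,J]_{\Lie}=[J,I]_{\Lie}=0$, which holds because $I\cap J=0$); conversely, an idempotent $e\in\Gamma^{\Lie}(A)$ with $e\neq 0,\id$ produces the decomposition $A=e(A)\oplus(\id-e)(A)$ into nonzero two-sided ideals. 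The only point that needs the Remark is that a priori $e(A)$ and $(\id-e)(A)$ are merely $\Lie$-ideals; the triviality of the bracket on $A$ removes this gap.

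For (ii) I would argue by linearity and so reduce to $x=[u,v]_{\Lie}$ with $u,v\in\Lieg$. Applying first the centroid identity $\varphi([u,v]_{\Lie})=[u,\varphi(v)]_{\Lie}$, then $\Lie$-invariance of $f$, then the centroid identity $\varphi([b,v]_{\Lie})=[b,\varphi(v)]_{\Lie}$, I get
\[
f\big(\varphi(x),b\big)=f\big([u,\varphi(v)]_{\Lie},b\big)=-f\big(u,[b,\varphi(v)]_{\Lie}\big)=-f\big(u,\varphi([b,v]_{\Lie})\big).
\]
On the other hand, using $\Lie$-invariance of $f$ and then the centroid identity $\varphi([b,v]_{\Lie})=[\varphi(b),v]_{\Lie}$,
\[
f\big(x,\varphi(b)\big)=f\big([u,v]_{\Lie},\varphi(b)\big)=-f\big(u,[\varphi(b),v]_{\Lie}\big)=-f\big(u,\varphi([b,v]_{\Lie})\big).
\]
Comparing, $f(\varphi(x),b)=f(x,\varphi(b))$, and summing over a presentation $x=\sum_i k_i[x_i',x_i'']_{\Lie}$ gives the claim for all $x\in\gamma_2^{\Lie}(\Lieg)$.

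The computations are routine; the only subtlety I anticipate is in (ii), where one must use \emph{both} centroid identities $\varphi([x,y]_{\Lie})=[\varphi(x),y]_{\Lie}=[x,\varphi(y)]_{\Lie}$ --- a proof using only one of them does not close up --- and apply $\Lie$-invariance of $f$ in the correct slot. Note that (ii) uses only that $x$ lies in $\gamma_2^{\Lie}(\Lieg)$, not the triviality of the bracket there.
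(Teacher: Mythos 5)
Your proof is correct. For part (ii) your computation is essentially the paper's: both arguments move $\varphi$ across the bracket with the centroid identities and apply $\Lie$-invariance twice; you merely meet in the middle at $-f\big(u,\varphi([b,v]_{\Lie})\big)$ instead of running a single left-to-right chain. For part (i), however, you take a genuinely cleaner route. You begin by observing that Remark~\ref{R:identity} forces $[x,a]=0$ for every $x\in\Lieg$ and every $a\in\gamma_2^{\Lie}(\Lieg)$, hence the bracket of $\gamma_2^{\Lie}(\Lieg)$ is identically zero, so $\Gamma^{\Lie}\big(\gamma_2^{\Lie}(\Lieg)\big)=\End_{\K}\big(\gamma_2^{\Lie}(\Lieg)\big)$ and every subspace is a two-sided ideal; the statement then collapses to the linear-algebra fact that $\End_{\K}(V)$ has a nontrivial idempotent exactly when $V$ decomposes. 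The paper instead follows the classical Lie-algebra template, verifying by hand that $\Ker(\varphi)$ and $\Ima(\varphi)$ are two-sided ideals and that they intersect trivially --- but each of those computations secretly reduces to the same vanishing $[x,\sum_i k_i[y_i',y_i'']_{\Lie}]=0$ that you isolate at the outset. What your approach buys is brevity and transparency: it makes explicit that in this relative setting statement (i) is degenerate (indecomposability of $\gamma_2^{\Lie}(\Lieg)$ amounts to $\dim\gamma_2^{\Lie}(\Lieg)\le 1$). What the paper's longer verification buys is a proof shaped so that it would survive in a context where the derived ideal is not abelian. Your parenthetical justification that the projection onto a direct summand lies in the centroid via $[I,J]_{\Lie}\subseteq I\cap J=0$ is also sound, though of course redundant once the bracket is known to vanish.
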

\begin{proof}
	(i)
	Assume that $\gamma_2^{\Lie}(\Lieg)$ has a decomposition $\gamma_2^{\Lie}(\Lieg) = {\Lieg}_1 \oplus {\Lieg}_2$. We can take the idempotent $\varphi \in \Gamma^{\Lie}({\gamma_2^{\Lie}(\Lieg)})$ such that $\varphi_{\mid {\Lieg}_1} = \id, \varphi_{\mid {\Lieg}_2} = 0$.
	
	Let us assume now that $\varphi \in \Gamma^{\Lie}(\gamma_2^{\Lie}(\Lieg))$ is an idempotent such that $\varphi \neq 0$ and~$\varphi \neq \id$. Then $\varphi^2(y) = \varphi(y)$, for all $y \in \gamma_2^{\Lie}(\Lieg)$. We claim that ${\Ker}(\varphi)$ and~$\Ima(\varphi)$ are two-sided ideals of $\gamma_2^{\Lie}(\Lieg)$. Indeed, for any $x \in {\Ker}(\varphi)$, $y \in \gamma_2^{\Lie}(\Lieg)$,
	\[
	\varphi([x, y]) = \varphi([x, \sum_i k_i [y'_{i}, y''_{i}]_{\Lie}]) = \varphi(0)=0,
	\]
	hence $[x, y] \in {\Ker}(\varphi)$, for all $y \in \gamma_2^{\Lie}(\Lieg)$. On the other hand,
	\begin{align*}
		\varphi([y, x]) &= \varphi([\sum_i k_i [y'_{i}, y''_{i}]_{\Lie},x]) = \varphi([\sum_i k_i [y'_{i}, y''_{i}]_{\Lie},x]_{\Lie}) \\
		{}&= [\sum_i k_i [y'_{i}, y''_{i}]_{\Lie},\varphi(x)]_{\Lie}=0.
	\end{align*}
	
	Let us show that $\Ima(\varphi)$ is a two-sided ideal of $\gamma_2^{\Lie}(\Lieg)$. Indeed, for any~${x,y \in \gamma_2^{\Lie}(\Lieg)}$ we have
	\[
	[\varphi(x),y] = [\varphi(x), \sum_i k_i [y'_{i}, y''_{i}]_{\Lie}] = \sum_i k_i [\varphi(x),[y'_{i}, y''_{i}]_{\Lie}] = 0 \in \Ima(\varphi).
	\]
	Then,
	\begin{align*}
		[y, \varphi(x)] &=[\sum_i k_i [y'_{i}, y''_{i}]_{\Lie}, \varphi(x)] = \sum_i k_i \big[[y'_{i}, y''_{i}]_{\Lie}, \varphi(x)\big]_{\Lie} \\
		{}&= \sum_i k_i \varphi(\big[[y'_{i}, y''_{i}]_{\Lie}, x\big]_{\Lie}) \in \Ima(\varphi).
	\end{align*}
	
	Moreover, we can see that ${\Ker}(\varphi) \cap \Ima(\varphi) = 0$. Indeed, if $x \in {\Ker}(\varphi) \cap \Ima(\varphi)$, then there exists $y \in \gamma_2^{\Lie}(\Lieg)$ such that $x = \varphi(y)$, hence $0 = \varphi(x) = \varphi^2(y) = \varphi(y) = x$.
	In addition $y = x + {\Ker}(\varphi)$, with $x \in \Ima(\varphi)$. Consequently, $ \gamma_2^{\Lie}(\Lieg) = {\Ker}(\varphi) \oplus \Ima(\varphi)$, contradicting the indecomposability of $\gamma_2^{\Lie}(\Lieg)$.
	
	(ii) Let us consider $f$ to be a $\Lie$-invariant $\K$-bilinear form on \Lieg. For any~${\varphi \in \Gamma^{\Lie}(\Lieg)}$, and~$x\in \gamma_2^{\Lie}(\Lieg), a_i, b, c_i  \in {\Lieg}$, we have:
	\begin{align*}
		f(\varphi(x), b) &= f \left( \varphi \left( \sum_i k_i [a_i,c_i]_{\Lie} \right), b \right) = f \left( \sum_i k_i [a_i,\varphi \left( c_i \right)]_{\Lie}, b \right) \\
		{}&= - f \left( \sum_i k_i [a_i,[b, \varphi \left( c_i \right)]_{\Lie}] \right) = - f \left( \sum_i k_i [a_i,\varphi \left( [b, c_i]_{\Lie} \right) ] \right) \\
		{}&= - f \left( \sum_i k_i [a_i,[\varphi \left( b \right), c_i]_{\Lie}] \right) = f \left( \sum_i k_i [a_i, c_i]_{\Lie}, \varphi(b) \right) = f\big(x, \varphi(b)\big).
	\end{align*}
\end{proof}


\section{\texorpdfstring{Generalised $\Lie$-derivations and quasi-$\Lie$-centroids}{Generalised Lie-derivations and quasi-Lie-centroids}} \label{generalised}

In this section, we introduce the notions of generalised $\Lie$-derivations and quasi-$\Lie$-centroids and analyse their interplay with $\Lie$-centroids.

\begin{definition}
Let $\Lieg$ be a Leibniz algebra. A linear map $f \colon {\Lieg} \to {\Lieg}$ is called a \emph{generalised $\Lie$-derivation of $\Lieg$} if there exist linear maps $f', f'' \colon {\Lieg} \to {\Lieg}$ such that
\[
	[f(x),y]_{\Lie} + [x, f''(y)]_{\Lie} = f'([x,y]_{\Lie}),
\]
for all $x, y \in {\Lieg}$. We denote by $\GenDer^{\Lie}({\Lieg})$ the set of all generalised $\Lie$-derivations of $\Lieg$.

We say that $f$ is a \emph{quasi-$\Lie$-derivation of $\Lieg$} if there exists a linear map $f' \colon {\Lieg} \to {\Lieg}$ such that
\[
	[f(x),y]_{\Lie} + [x, f(y)]_{\Lie} = f'([x,y]_{\Lie}),
\]
for all $x, y \in {\Lieg}$. We denote by $\QDer^{\Lie}({\Lieg})$ the set of all quasi-$\Lie$-derivations of $\Lieg$.
\end{definition}

It is easy to check that $\GenDer^{\Lie}({\Lieg})$ and $\QDer^{\Lie}({\Lieg})$ are subalgebras of $\End_{\K}({\Lieg})$. In fact, we have the following inclusion tower:
\[
	\Der_z^{\Lie}({\Lieg}) \subseteq \Der^{\Lie}({\Lieg}) \subseteq \QDer^{\Lie}({\Lieg}) \subseteq \GenDer^{\Lie}({\Lieg}) \subseteq \End_{\K}({\Lieg}).
\]

\begin{example} Let $\Lieg$ be the two-dimensional Leibniz algebra in Example~\ref{derivation1}~(c).
\begin{enumerate}
\item[(a)] Let $f, f' \colon {\Lieg} \to {\Lieg}$ be the linear maps represented by the matrices $\left( \begin{array}{cc} 0 & 0 \\ 0 & 1 \end{array} \right)$ and $\left( \begin{array}{cc} 1 & 1 \\ 0 & 1 \end{array} \right)$, respectively. It can be checked that $f \in \QDer^{\Lie}({\Lieg})$, although~${f \notin \Der^{\Lie}({\Lieg})}$.

\item [(b)] Consider the linear maps $f, f', f'' \colon {\Lieg} \to {\Lieg}$ represented by the matrices $\left( \begin{array}{cc} 1 & 1 \\ 0 & 0 \end{array} \right)$, $\left( \begin{array}{cc} 0 & -1 \\ 0 & -1 \end{array} \right)$ and $\left( \begin{array}{cc} 0 & 1 \\ 0 & 0 \end{array} \right)$, respectively. It is easy to check that $f \in \GenDer^{\Lie}({\Lieg})$, but $f \notin \QDer^{\Lie}({\Lieg})$.
\end{enumerate}
\end{example}

\begin{definition}
The \emph{quasi-$\Lie$-centroid} of a Leibniz algebra $\Lieg$ is the set of all linear maps $d \colon {\Lieg} \to {\Lieg}$ satisfying
\[
	[d(x), y]_{\Lie} = [x, d(y)]_{\Lie}
\]
for all $x, y \in {\Lieg}$. We denote this set by $\QQ{\Gamma}^{\Lie}({\Lieg})$.
\end{definition}

\begin{example}
Let $\Lieg$ be the two-dimensional Leibniz algebra in Example~\ref{derivation1}~(c). The linear maps $f \colon {\Lieg} \to {\Lieg}$ represented by the matrices $\left( \begin{array}{cc} a & b \\ c & a \end{array} \right), a, b, c \in \mathbb{C}$, are quasi-$\Lie$-centroids.
\end{example}

\begin{lemma} \label{lemma 1}
Let $\Lieg$ be a Leibniz algebra. We have the following inclusions,
\begin{enumerate}
\item[(i)] $[\Der^{\Lie}({\Lieg}), {\Gamma}^{\Lie}({\Lieg})] \subseteq {\Gamma}^{\Lie}({\Lieg})$,
\item[(ii)] $[\QQ{\Gamma}^{\Lie}({\Lieg}), \QDer^{\Lie}({\Lieg})] \subseteq \QQ{\Gamma}^{\Lie}({\Lieg})$,

\noindent $[ \QDer^{\Lie}({\Lieg}), \QQ{\Gamma}^{\Lie}({\Lieg})] \subseteq \QQ{\Gamma}^{\Lie}({\Lieg})$,
\item[(iii)] $[\QQ{\Gamma}^{\Lie}({\Lieg}), \QQ{\Gamma}^{\Lie}({\Lieg})] \subseteq \QDer^{\Lie}({\Lieg})$,
\item[(iv)] ${\Gamma}^{\Lie}({\Lieg}) \subseteq \QDer^{\Lie}({\Lieg})$,
\item[(v)] $\QDer^{\Lie}({\Lieg}) + \QQ{\Gamma}^{\Lie}({\Lieg}) \subseteq \GenDer^{\Lie}({\Lieg})$.
\end{enumerate}
\end{lemma}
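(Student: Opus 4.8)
The plan is to prove all five inclusions by direct verification from the defining identities, relying throughout on the symmetry $[a,b]_{\Lie}=[b,a]_{\Lie}$ of the bracket $[-,-]_{\Lie}$. In each case I fix maps in the two classes appearing on the left, form the commutator $[f,g]=f\circ g-g\circ f$ (or the sum $f+g$, in case (v)), and exhibit the auxiliary linear map(s) witnessing membership in the class on the right.

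I would start with the two easy items. For (iv), if $\varphi\in\Gamma^{\Lie}(\Lieg)$ then adding the two centroid identities $[\varphi(x),y]_{\Lie}=\varphi([x,y]_{\Lie})$ and $[x,\varphi(y)]_{\Lie}=\varphi([x,y]_{\Lie})$ gives $[\varphi(x),y]_{\Lie}+[x,\varphi(y)]_{\Lie}=2\varphi([x,y]_{\Lie})$, so $\varphi\in\QDer^{\Lie}(\Lieg)$ with witness $f'=2\varphi$. For (v), given $D\in\QDer^{\Lie}(\Lieg)$ with witness $D'$ and $\delta\in\QQ\Gamma^{\Lie}(\Lieg)$, rewrite $[D(x),y]_{\Lie}=D'([x,y]_{\Lie})-[x,D(y)]_{\Lie}$ and $[\delta(x),y]_{\Lie}=[x,\delta(y)]_{\Lie}$; summing yields $[(D+\delta)(x),y]_{\Lie}+[x,(D-\delta)(y)]_{\Lie}=D'([x,y]_{\Lie})$, so $D+\delta\in\GenDer^{\Lie}(\Lieg)$ with $f'=D'$ and $f''=D-\delta$.

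For (i), take $d\in\Der^{\Lie}(\Lieg)$ and $\varphi\in\Gamma^{\Lie}(\Lieg)$. Expanding $d(\varphi([x,y]_{\Lie}))$ via $\varphi([x,y]_{\Lie})=[\varphi(x),y]_{\Lie}$ and the derivation rule gives $[d\varphi(x),y]_{\Lie}+\varphi([x,d(y)]_{\Lie})$; expanding $\varphi(d([x,y]_{\Lie}))$ via the derivation rule and the centroid identity gives $[\varphi d(x),y]_{\Lie}+\varphi([x,d(y)]_{\Lie})$; subtracting, the common term cancels and $[d,\varphi]([x,y]_{\Lie})=[[d,\varphi](x),y]_{\Lie}$. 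Repeating the computation but starting from $\varphi([x,y]_{\Lie})=[x,\varphi(y)]_{\Lie}$ gives $[d,\varphi]([x,y]_{\Lie})=[x,[d,\varphi](y)]_{\Lie}$, so $[d,\varphi]\in\Gamma^{\Lie}(\Lieg)$. For (ii) it suffices to treat $[\QQ\Gamma^{\Lie}(\Lieg),\QDer^{\Lie}(\Lieg)]$, the other inclusion following since both sets are subspaces and the bracket is antisymmetric; with $\delta\in\QQ\Gamma^{\Lie}(\Lieg)$ and $D\in\QDer^{\Lie}(\Lieg)$ (witness $D'$) one computes $[\delta D(x),y]_{\Lie}=[D(x),\delta(y)]_{\Lie}$ and, applying the quasi-derivation identity to the pairs $(\delta(x),y)$ and $(x,\delta(y))$ with the quasi-centroid identity interleaved, $[D\delta(x),y]_{\Lie}=[D(x),\delta(y)]_{\Lie}+[x,D\delta(y)]_{\Lie}-[\delta(x),D(y)]_{\Lie}$; subtracting and using $[\delta(x),D(y)]_{\Lie}=[x,\delta D(y)]_{\Lie}$ gives $[[\delta,D](x),y]_{\Lie}=[x,[\delta,D](y)]_{\Lie}$, so $[\delta,D]\in\QQ\Gamma^{\Lie}(\Lieg)$. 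Finally, for (iii), with $\delta_1,\delta_2\in\QQ\Gamma^{\Lie}(\Lieg)$, two applications of the quasi-centroid identity give $[\delta_1\delta_2(x),y]_{\Lie}=[x,\delta_2\delta_1(y)]_{\Lie}$ and, symmetrically, $[\delta_2\delta_1(x),y]_{\Lie}=[x,\delta_1\delta_2(y)]_{\Lie}$, whence $[[\delta_1,\delta_2](x),y]_{\Lie}+[x,[\delta_1,\delta_2](y)]_{\Lie}=0$, so $[\delta_1,\delta_2]\in\QDer^{\Lie}(\Lieg)$ with witness $f'=0$.

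None of the steps is a genuine obstacle; the lemma is a matter of careful bookkeeping of which identity is applied to which pair of arguments. The one place that needs a little attention is (ii), where the quasi-derivation identity must be used twice — once in the form isolating $[D\delta(x),y]_{\Lie}$ and once to re-expand $D'([x,\delta(y)]_{\Lie})$ — with the quasi-centroid identity interleaved; and in (iii) one should notice that the natural outcome is the vanishing of $[[\delta_1,\delta_2](x),y]_{\Lie}+[x,[\delta_1,\delta_2](y)]_{\Lie}$, i.e.\ that the zero map serves as witness.
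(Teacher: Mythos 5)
Your proposal is correct and follows essentially the same route as the paper: direct verification of each inclusion from the defining identities, with the same witnesses ($f'=2\varphi$ in (iv), $f'=D'$ and $f''=D-\delta$ in (v), and the zero map in (iii)). The only cosmetic difference is in (ii), where you derive $[x,[\delta,D](y)]_{\Lie}$ directly from $[[\delta,D](x),y]_{\Lie}$ instead of computing both expressions separately and comparing, and you dispatch the second inclusion by antisymmetry of the commutator; both are sound.
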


\begin{proof}
(i) Let $d \in \Der^{\Lie}({\Lieg}), d' \in {\Gamma}^{\Lie}({\Lieg})$, then for all $x, y \in {\Lieg}$:
\begin{align*}
	[d,d']([x,y]_{\Lie}) =& (dd' -d'd)([x,y]_{\Lie}) = d[d'(x),y]_{\Lie} - d'([d(x),y]_{\Lie} + [x, d(y)]_{\Lie})\\
	{} = & [dd'(x),y]_{\Lie} - [d'd(x), y]_{\Lie} = \big[[d, d'](x), y\big]_{\Lie}.
\end{align*}
A similar computation shows the other equality.

(ii) Let $d \in \QQ{\Gamma}^{\Lie}({\Lieg}), d' \in \QDer^{\Lie}({\Lieg})$, then for all $x, y \in {\Lieg}$:
\begin{align*}
	\big[[d,d'](x),y\big]_{\Lie} &= [(dd' -d'd)(x),y]_{\Lie} = [dd'(x),y]_{\Lie} - [d'd(x),y]_{\Lie} \\
	{}&= [d'(x), d(y)]_{\Lie} - d_1[d(x),y]_{\Lie} + [d(x), d'(y)]_{\Lie},\\
	[x,[d,d'](y)]_{\Lie} &= [x,(dd' -d'd)(y)]_{\Lie} = [x, dd'(y)]_{\Lie} - [x, d'd(y)]_{\Lie} \\
	{}&= [d(x), d'(y)]_{\Lie} - d_1[x, d(y)]_{\Lie} +[d'(x), d(y)]_{\Lie}.
\end{align*}
The second inclusion can be checked analogously.

(iii) Let $d, d' \in \QQ{\Gamma}^{\Lie}({\Lieg})$. Since

\begin{align*}
	\big[[d,d'](x),y\big]_{\Lie} + [x, [d,d'](y)]_{\Lie} &= [d'(x),d(y)]_{\Lie} - [d(x),d'(y)]_{\Lie} \\
&\quad + [d(x),d'(y)]_{\Lie} - [d'(x),d(y)]_{\Lie}=0,
\end{align*}
then $[d,d'] \in \QDer^{\Lie}({\Lieg})$.

(iv) Let $d \in {\Gamma}^{\Lie}({\Lieg})$, then $[d(x),y]_{\Lie}+[x,d(y)]_{\Lie} = 2 d([x,y]_{\Lie})$, and therefore~${d \in \QDer^{\Lie}({\Lieg})}$, where $d' = 2d$.

(v) Let $d \in \QDer^{\Lie}({\Lieg}), d' \in \QQ{\Gamma}^{\Lie}({\Lieg})$. Since
\begin{align*}
	[(d+d')(x), y]_{\Lie} &= d_1([x,y]_{\Lie}) - [x, d(y)]_{\Lie} + [x, d'(y)]_{\Lie} \\
	{}&= d_1([x,y]_{\Lie}) - [x, (d-d')(y)]_{\Lie},
\end{align*}
then $[(d+d')(x), y]_{\Lie} + [x, (d-d')(y)]_{\Lie} = d_1([x,y]_{\Lie})$, i.e., $d+d' \in \GenDer^{\Lie}({\Lieg})$.
\end{proof}

\begin{theorem}
Let $\Lieg$ be a Leibniz algebra. Then
\[
[{\Gamma}^{\Lie}({\Lieg}), \QQ{\Gamma}^{\Lie}({\Lieg})] \subseteq \End_{\K}\big({\Lieg}, \ZLie({\Lieg})\big).
\]
Moreover, if $ \ZLie({\Lieg}) = 0$, then $[{\Gamma}^{\Lie}({\Lieg}), \QQ{\Gamma}^{\Lie}({\Lieg})]=0$.
\end{theorem}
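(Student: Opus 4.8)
Fix $d \in \Gamma^{\Lie}(\Lieg)$ and $d' \in \QQ\Gamma^{\Lie}(\Lieg)$. Since $\End_{\K}\big(\Lieg,\ZLie(\Lieg)\big)$ consists of the linear maps sending $\Lieg$ into $\ZLie(\Lieg)$, and since $[-,-]_{\Lie}$ is symmetric so that $\ZLie(\Lieg)=\{z\mid [x,z]_{\Lie}=0\text{ for all }x\}$, it suffices to prove $[x,[d,d'](y)]_{\Lie}=0$ for all $x,y\in\Lieg$. I will record that a centroid element satisfies the three identities $d([a,b]_{\Lie})=[d(a),b]_{\Lie}=[a,d(b)]_{\Lie}$ and a quasi-centroid element satisfies $[d'(a),b]_{\Lie}=[a,d'(b)]_{\Lie}$; note in particular $\Gamma^{\Lie}(\Lieg)\subseteq\QQ\Gamma^{\Lie}(\Lieg)$.

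The heart of the argument is to evaluate $[dd'(x),y]_{\Lie}$ in two ways. First, applying the centroid identity $[d(a),b]_{\Lie}=[a,d(b)]_{\Lie}$ with $a=d'(x)$, then the quasi-centroid identity with $a=x$, $b=d(y)$, gives $[dd'(x),y]_{\Lie}=[d'(x),d(y)]_{\Lie}=[x,d'd(y)]_{\Lie}$. Second, applying instead $[d(a),b]_{\Lie}=d([a,b]_{\Lie})$ with $a=d'(x)$, then the quasi-centroid identity inside the bracket, then $d([a,b]_{\Lie})=[a,d(b)]_{\Lie}$ with $a=x$, $b=d'(y)$, gives $[dd'(x),y]_{\Lie}=d([x,d'(y)]_{\Lie})=[x,dd'(y)]_{\Lie}$. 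Comparing the two outputs yields $[x,(dd'-d'd)(y)]_{\Lie}=0$, that is, $[x,[d,d'](y)]_{\Lie}=0$ for all $x,y$. Hence $[d,d'](y)\in\ZLie(\Lieg)$, so $[d,d']\in\End_{\K}\big(\Lieg,\ZLie(\Lieg)\big)$, which proves the inclusion. If moreover $\ZLie(\Lieg)=0$, the same computation forces $[d,d']=0$, and since $d,d'$ were arbitrary we conclude $[\Gamma^{\Lie}(\Lieg),\QQ\Gamma^{\Lie}(\Lieg)]=0$.

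There is no real obstacle here beyond organising the bookkeeping correctly: the entire content is the double computation above, which stays within the centroid/quasi-centroid identities at every step. As an alternative packaging one could instead note, via Lemma~\ref{lemma 1}, that $\Gamma^{\Lie}(\Lieg)\subseteq\QDer^{\Lie}(\Lieg)$ by (iv) and then $[d,d']\in\QQ\Gamma^{\Lie}(\Lieg)$ by (ii), so $[[d,d'](x),y]_{\Lie}=[x,[d,d'](y)]_{\Lie}$; the same manipulations as above also give the skew relation $[[d,d'](x),y]_{\Lie}=-[x,[d,d'](y)]_{\Lie}$, and adding the two and using $\tfrac12\in\K$ recovers $[x,[d,d'](y)]_{\Lie}=0$. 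I would present the direct two-ways computation as the main argument, as it is shortest and self-contained.
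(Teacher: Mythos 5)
Your proof is correct and is essentially the same argument as the paper's: both reduce the claim to a short manipulation of the centroid identities $d([a,b]_{\Lie})=[d(a),b]_{\Lie}=[a,d(b)]_{\Lie}$ and the quasi-centroid identity $[d'(a),b]_{\Lie}=[a,d'(b)]_{\Lie}$, the only difference being that you evaluate $[dd'(x),y]_{\Lie}$ two ways to get $[x,[d,d'](y)]_{\Lie}=0$ while the paper expands $[[d,d'](x),y]_{\Lie}$ directly, which is equivalent by the symmetry of $[-,-]_{\Lie}$.
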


\begin{proof}
Let $d \in {\Gamma}^{\Lie}({\Lieg}), d' \in \QQ{\Gamma}^{\Lie}({\Lieg})$. Then,
\begin{align*}
\big[[d,d'](x), y\big]_{\Lie}&=[dd'(x),y]_{\Lie} - [d'd(x), y]_{\Lie} \\
{}&= d([d'(x),y]_{\Lie}) - [d(x), d'(y)]_{\Lie} \\
{}&= d([x, d'(y)]_{\Lie}) - [d(x), d'(y)]_{\Lie} = 0,
\end{align*}
i.e., $[d,d'](x) \in \ZLie({\Lieg})$.
\end{proof}

\begin{theorem}
Let $\Lieg$ be a Leibniz algebra. Then ${\Gamma}^{\Lie}({\Lieg}) = \QDer^{\Lie}({\Lieg}) \cap \QQ{\Gamma}^{\Lie}({\Lieg})$.
\end{theorem}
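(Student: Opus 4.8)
The plan is to prove the two inclusions separately; the inclusion $\Gamma^{\Lie}(\Lieg)\subseteq\QDer^{\Lie}(\Lieg)\cap\QQ{\Gamma}^{\Lie}(\Lieg)$ is formal, while the reverse one carries all the content.

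For the first inclusion I would just unwind the definitions. If $d\in\Gamma^{\Lie}(\Lieg)$, one of the defining identities of a $\Lie$-centroid is precisely $[d(x),y]_{\Lie}=[x,d(y)]_{\Lie}$, so $d\in\QQ{\Gamma}^{\Lie}(\Lieg)$; and $d\in\QDer^{\Lie}(\Lieg)$ is exactly Lemma~\ref{lemma 1}(iv), the associated linear map being $2d$ (indeed $[d(x),y]_{\Lie}+[x,d(y)]_{\Lie}=2\,d([x,y]_{\Lie})$ for such $d$).

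For the reverse inclusion, fix $d\in\QDer^{\Lie}(\Lieg)\cap\QQ{\Gamma}^{\Lie}(\Lieg)$ and choose a linear map $d'\colon\Lieg\to\Lieg$ witnessing $d\in\QDer^{\Lie}(\Lieg)$, so that $[d(x),y]_{\Lie}+[x,d(y)]_{\Lie}=d'([x,y]_{\Lie})$ for all $x,y\in\Lieg$. Inserting the quasi-$\Lie$-centroid identity $[d(x),y]_{\Lie}=[x,d(y)]_{\Lie}$ into this relation gives $2[d(x),y]_{\Lie}=d'([x,y]_{\Lie})$, and since $\tfrac{1}{2}\in\K$ we obtain
\[
[d(x),y]_{\Lie}=[x,d(y)]_{\Lie}=\tfrac{1}{2}\,d'([x,y]_{\Lie})\qquad\text{for all }x,y\in\Lieg.
\]
Because $\gamma_2^{\Lie}(\Lieg)$ is linearly spanned by the elements $[x,y]_{\Lie}$, the proof then reduces to verifying that $d$ and $\tfrac{1}{2}d'$ coincide on $\gamma_2^{\Lie}(\Lieg)$, i.e. that $d([x,y]_{\Lie})=[d(x),y]_{\Lie}$ for all $x,y$; combined with the displayed equalities this provides precisely the identities $d([x,y]_{\Lie})=[d(x),y]_{\Lie}=[x,d(y)]_{\Lie}$ that characterise a $\Lie$-centroid.

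This last identity is where I expect the main obstacle to be. My approach would be to fix $x,y$, put $\delta=d([x,y]_{\Lie})-[d(x),y]_{\Lie}$, and evaluate $[\delta,z]_{\Lie}$ for an arbitrary $z\in\Lieg$: rewrite $[d([x,y]_{\Lie}),z]_{\Lie}$ with the quasi-$\Lie$-centroid identity applied to $\big([x,y]_{\Lie},z\big)$, expand $[[d(x),y]_{\Lie},z]_{\Lie}$ into $\Lie$-brackets through the Leibniz identity, and then use Remark~\ref{R:identity} repeatedly (it makes $[z',w]=0$ whenever $w\in\gamma_2^{\Lie}(\Lieg)$, so it kills every term of the form $[z',[u,v]_{\Lie}]$) together with the quasi-$\Lie$-centroid identity to transport $d$ across brackets. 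The aim is to collapse the whole expression to $[\delta,z]_{\Lie}=0$ for every $z$; since $\delta\in\gamma_2^{\Lie}(\Lieg)\subseteq Z^{r}(\Lieg)$ by Remark~\ref{R:identity} (once one notes that $d$ preserves $\gamma_2^{\Lie}(\Lieg)$, which should be checked along the way), this yields $\delta\in Z(\Lieg)\subseteq\ZLie(\Lieg)$, and hence $\delta=0$. I anticipate that this bracket-chase is the delicate part: a naive version of it seems only to locate $\delta$ inside $\ZLie(\Lieg)\cap\gamma_2^{\Lie}(\Lieg)$, so it may be necessary either to push the computation further or to invoke an additional hypothesis — for instance $\ZLie(\Lieg)=0$, exactly as in the theorem on $[\Gamma^{\Lie}(\Lieg),\QQ{\Gamma}^{\Lie}(\Lieg)]$ above — in order to force $\delta$ to vanish.
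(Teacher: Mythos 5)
Your first inclusion is correct and is exactly what the paper does: $\Gamma^{\Lie}(\Lieg)\subseteq\QQ\Gamma^{\Lie}(\Lieg)$ is one of the defining identities, and $\Gamma^{\Lie}(\Lieg)\subseteq\QDer^{\Lie}(\Lieg)$ is Lemma~\ref{lemma 1}(iv) with witness $2d$. Your reduction of the reverse inclusion to the single identity $d([x,y]_{\Lie})=[d(x),y]_{\Lie}$ is also correct, and so is your suspicion that this identity cannot be extracted from the hypotheses: the inclusion $\QDer^{\Lie}(\Lieg)\cap\QQ\Gamma^{\Lie}(\Lieg)\subseteq\Gamma^{\Lie}(\Lieg)$ fails in general. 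Take the algebra of Example~\ref{derivation1}(d), with basis $\{e,f\}$, $[f,f]=\lambda e$ ($\lambda\neq 0$) and all other products zero, and let $d(e)=e$, $d(f)=0$. Every element $d(x)$ is a multiple of $e$, and $e$ brackets trivially on either side, so $[d(x),y]_{\Lie}=0=[x,d(y)]_{\Lie}$ for all $x,y$; hence $d\in\QQ\Gamma^{\Lie}(\Lieg)$ and $d\in\QDer^{\Lie}(\Lieg)$ with witness $d'=0$. Yet $d([f,f]_{\Lie})=d(2\lambda e)=2\lambda e\neq 0=[d(f),f]_{\Lie}$, so $d\notin\Gamma^{\Lie}(\Lieg)$. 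Your predicted obstruction is realised exactly: here $\delta=2\lambda e$ lies in $\ZLie(\Lieg)\cap\gamma_2^{\Lie}(\Lieg)=\langle e\rangle\neq 0$, and since $\delta$ is $\Lie$-central no amount of bracket-chasing can force it to vanish.

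For comparison, the paper's proof of this direction passes over precisely the point you isolated: from $f'([x,y]_{\Lie})=2[f(x),y]_{\Lie}$ it concludes ``therefore $f'=2f$,'' but the quasi-$\Lie$-derivation axiom only provides \emph{some} witness $f'$, and nothing identifies $f'$ with $2f$ on $\gamma_2^{\Lie}(\Lieg)$ (in the example above $f'=0$ while $2f\neq 0$ there). So the gap in your proposal is not a defect of your strategy but of the statement itself: it needs an additional hypothesis ruling out the obstruction in $\ZLie(\Lieg)\cap\gamma_2^{\Lie}(\Lieg)$ --- your suggestion $\ZLie(\Lieg)=0$, as in the preceding theorem on $[\Gamma^{\Lie}(\Lieg),\QQ\Gamma^{\Lie}(\Lieg)]$, is the natural candidate, although one would still have to carry out the bracket computation you sketch to confirm that it suffices.
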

\begin{proof}
Let $f \in \QDer^{\Lie}({\Lieg}) \cap \QQ{\Gamma}^{\Lie}({\Lieg})$, then there exists a linear map $f' \colon {\Lieg} \to {\Lieg}$ such that $f'([x,y]_{\Lie}) = [f(x),y]_{\Lie} + [x, f(y)]_{\Lie}$.

We claim that $f \in {\Gamma}^{\Lie}({\Lieg})$. Indeed, $[f(x),y]_{\Lie} = [x, f(y)]_{\Lie}$ since $f \in \QQ{\Gamma}^{\Lie}({\Lieg})$. On the other hand, $f([x,y]_{\Lie}) = [f(x), y]_{\Lie}$ since $f'([x,y]_{\Lie}) = 2 [f(x), y]_{\Lie}$, therefore $f' = 2f$ provides the needed equality.

Conversely, $f \in {\Gamma}^{\Lie}({\Lieg})$ implies that $f([x,y]_{\Lie}) = [f(x), y]_{\Lie}=[x, f(y)]_{\Lie}$, hence~$f \in \QQ{\Gamma}^{\Lie}({\Lieg})$. Moreover $[f(x), y]_{\Lie}+[x, f(y)]_{\Lie} = 2 f([x,y]_{\Lie})$, so just taking~$f' = 2f$ we conclude the proof.
\end{proof}

\begin{lemma} \label{lemma 2}
Let $\Liem$ and $\Lien$ be two two-sided ideals of a Leibniz algebra $\Lieg$, such that~${{\Lieg} = {\Liem} \oplus {\Lien}}$. Then:
\begin{enumerate}
\item[(i)] $\ZLie({\Lieg}) = \ZLie({\Liem}) \oplus \ZLie({\Lien})$,
\item[(ii)] If $\ZLie({\Lieg})=0$ then,
\begin{itemize}
\item[(a)] $\Der^{\Lie}({\Lieg}) = \Der^{\Lie}({\Liem}) \oplus \Der^{\Lie}({\Lien})$.
\item[(b)] $\GenDer^{\Lie}({\Lieg}) = \GenDer^{\Lie}({\Liem}) \oplus \GenDer^{\Lie}({\Lien})$.
\item[(c)] $\QDer^{\Lie}({\Lieg}) = \QDer^{\Lie}({\Liem}) \oplus \QDer^{\Lie}({\Lien})$.
\item[(d)] ${\Gamma}^{\Lie}({\Lieg}) = {\Gamma}^{\Lie}({\Liem}) \oplus {\Gamma}^{\Lie}({\Lien})$.
\item[(e)] ${\QQ\Gamma}^{\Lie}({\Lieg}) = \QQ{\Gamma}^{\Lie}({\Liem}) \oplus \QQ{\Gamma}^{\Lie}({\Lien})$.
\end{itemize}

\end{enumerate}
\end{lemma}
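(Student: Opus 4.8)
The plan is to build everything on one structural observation: because $\Liem$ and $\Lien$ are two-sided ideals with $\Lieg=\Liem\oplus\Lien$, one has $[\Liem,\Lien]=[\Lien,\Liem]\subseteq\Liem\cap\Lien=0$, so both $[-,-]$ and $[-,-]_{\Lie}$ split as the direct sum of the corresponding brackets of $\Liem$ and of $\Lien$; write $\pi_{\Liem},\pi_{\Lien}$ for the two projections. For part~(i) I would take $z=m+n$ with $m\in\Liem$, $n\in\Lien$, and an arbitrary $x=x_{\Liem}+x_{\Lien}\in\Lieg$; then $[x,z]_{\Lie}=[x_{\Liem},m]_{\Lie}+[x_{\Lien},n]_{\Lie}$ is the sum of an element of $\Liem$ and one of $\Lien$, so it vanishes for every $x$ exactly when $[x_{\Liem},m]_{\Lie}=0$ for all $x_{\Liem}\in\Liem$ and $[x_{\Lien},n]_{\Lie}=0$ for all $x_{\Lien}\in\Lien$, i.e.\ when $m\in\ZLie(\Liem)$ and $n\in\ZLie(\Lien)$. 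Since $\ZLie(\Liem)\cap\ZLie(\Lien)=0$ inside $\Lieg$, this gives $\ZLie(\Lieg)=\ZLie(\Liem)\oplus\ZLie(\Lien)$.

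For part~(ii) the crucial claim I would establish first — and the only place where $\ZLie(\Lieg)=0$ is actually used — is that every map $f$ belonging to any of the five sets over $\Lieg$ preserves the decomposition, i.e.\ $f(\Liem)\subseteq\Liem$ and $f(\Lien)\subseteq\Lien$. The argument is uniform: given $m\in\Liem$, write $f(m)=a+b$ with $a\in\Liem$, $b\in\Lien$; substituting $m$ and an arbitrary $n\in\Lien$ into the defining identity and using $[m,n]_{\Lie}=0$ produces an equality one of whose sides lies in $\Liem$ and the other in $\Lien$, whose $\Lien$-component reads $[b,n]_{\Lie}=0$ for all $n\in\Lien$; as $[b,m']_{\Lie}=0$ automatically for $m'\in\Liem$, this forces $b\in\ZLie(\Lieg)=0$, hence $f(m)\in\Liem$, and symmetrically $f(\Lien)\subseteq\Lien$. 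Concretely, for $d\in\Der^{\Lie}(\Lieg)$ one uses $0=d([m,n]_{\Lie})=[d(m),n]_{\Lie}+[m,d(n)]_{\Lie}$; for $\varphi\in\Gamma^{\Lie}(\Lieg)$ one uses $[\varphi(m),n]_{\Lie}=\varphi([m,n]_{\Lie})=0$; for $d\in\QQ{\Gamma}^{\Lie}(\Lieg)$ one uses $[d(m),n]_{\Lie}=[m,d(n)]_{\Lie}$; and for $f\in\QDer^{\Lie}(\Lieg)$ (resp.\ $\GenDer^{\Lie}(\Lieg)$) one uses $[f(m),n]_{\Lie}+[m,f(n)]_{\Lie}=f'(0)=0$ (resp.\ with $f''$ in the second slot, and a further pairing shows $f''$ likewise preserves the decomposition).

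Granting the claim, I would deduce each of (a)--(e) by the same pattern. Given $f$ of one of the five types on $\Lieg$, its restrictions $f|_{\Liem}$ and $f|_{\Lien}$ are maps of the same type on $\Liem$ and $\Lien$: the relevant identity on each summand is simply the restriction of the one on $\Lieg$, and in the $\GenDer^{\Lie}$ and $\QDer^{\Lie}$ cases the auxiliary maps may be taken to be $\pi_{\Liem}\circ f'|_{\Liem}$, $\pi_{\Liem}\circ f''|_{\Liem}$ (and likewise over $\Lien$), which work because the left-hand side of each identity already lies in the appropriate summand. Thus $f=f|_{\Liem}\oplus f|_{\Lien}$, giving the inclusion $\subseteq$; conversely a pair of maps of a given type on $\Liem$ and on $\Lien$ glues to a map of the same type on $\Lieg$, since all the defining identities only couple elements within one summand. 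The sum is direct because a map that is zero on $\Lien$ with image in $\Liem$ and also zero on $\Liem$ with image in $\Lien$ is the zero map.

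The substantive point — the preservation claim in (ii) — is exactly where the hypothesis cannot be dropped: for instance, if $\Liem$ and $\Lien$ are two copies of the one-dimensional abelian Leibniz algebra, then $\Der^{\Lie}(\Lieg)=\End_{\K}(\Lieg)$ is four-dimensional while $\Der^{\Lie}(\Liem)\oplus\Der^{\Lie}(\Lien)$ is two-dimensional, so (ii)(a) fails without $\ZLie(\Lieg)=0$. Everything else is the routine bookkeeping of splitting bilinear identities along a direct sum of ideals, the only mild technicality being the projection adjustment of the auxiliary maps $f',f''$ just mentioned.
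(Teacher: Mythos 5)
Your proof is correct and follows essentially the same route as the paper: part (i) by splitting the bracket along the decomposition, and part (ii) by first showing (using $\ZLie(\Lieg)=0$ together with the vanishing of cross-brackets between the two ideals) that each map preserves the summands. You additionally spell out the cases the paper dismisses as ``similar'' --- in particular the projection adjustment of the auxiliary maps $f',f''$ for $\QDer^{\Lie}$ and $\GenDer^{\Lie}$, and a counterexample showing the hypothesis $\ZLie(\Lieg)=0$ is needed --- which is a welcome refinement rather than a different argument.
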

\begin{proof}
(i) Let $m \in \ZLie({\Liem}), n \in \ZLie({\Lien})$, then for any $y = m'+n' \in {\Lieg}$, we have:
\[
[m+n,y]_{\Lie} = [m,m']_{\Lie}+ [m,n']_{\Lie}+ [n,m']_{\Lie}+ [n,n']_{\Lie}=0,
\]
i.e., $m+n \in \ZLie({\Lieg})$.

Conversely, for any $x = m+n \in \ZLie({\Lieg})$ and for all $y = m'+n' \in {\Lieg}$, we have~$0 = [x,y]_{\Lie} = [m,m']_{\Lie}+[n,n']_{\Lie}$, hence $[m,m']_{\Lie}=0, [n,n']_{\Lie}=0$ for all~$m' \in {\Liem}$, $n' \in {\Lien}$, i.e., $m \in \ZLie({\Liem})$, $n \in \ZLie({\Lien})$.

(ii) (a) Let $d \in \Der^{\Lie}({\Lieg})$. For any $m \in {\Liem}, n \in {\Lien}$, we have
\[
[d(m),n]_{\Lie} = d([m,n]_{\Lie}) - [m,d(n)]_{\Lie} = -[m, m_1]_{\Lie},
\]
for some $m_1 \in {\Liem}$.
Assume that $d(m) = m'+n'$, then
\[
-[m,m_1]_{\Lie} = [d(m),n]_{\Lie} = [m'+n',n]_{\Lie}= [n',n]_{\Lie},
\]
hence $[n',n]_{\Lie}=0$ for all $n \in {\Lien}$, so $n' \in \ZLie({\Lien})=0$. Consequently $d(m) = m'$, i.e.,~$d({\Liem}) \subseteq {\Liem}$.
Similarly, $d({\Lien}) \subseteq {\Lien}$.

Conversely, let $d_1 \in \Der^{\Lie}({\Liem})$, $d_2 \in \Der^{\Lie}({\Lien})$, then the derivation $d \colon {\Lieg} \to {\Lieg}$, defined by $d(g) = d_1(m) + d_2(n)$, for all $g = m+n \in {\Lieg}$, is a $\Lie$-derivation of $\Lieg$.

The other statements are obtained in a similar way.
\end{proof}

\begin{proposition}
Let $\Lieg$ be a Leibniz algebra. Then, ${\QQ\Gamma}^{\Lie}({\Lieg}) + [{\QQ\Gamma}^{\Lie}({\Lieg}), {\QQ\Gamma}^{\Lie}({\Lieg})]$ is a subalgebra of $\GenDer^{\Lie}({\Lieg})$.
\end{proposition}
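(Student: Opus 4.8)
Write $V = \QQ\Gamma^{\Lie}(\Lieg)$ and $W = [\QQ\Gamma^{\Lie}(\Lieg),\QQ\Gamma^{\Lie}(\Lieg)]$, so the claim is that $V+W$ is a subalgebra of $\GenDer^{\Lie}(\Lieg)$. The plan is first to place $V+W$ inside $\GenDer^{\Lie}(\Lieg)$ and then to check that it is closed under the commutator bracket; being a linear subspace is automatic. For the inclusion, Lemma~\ref{lemma 1}(iii) gives $W \subseteq \QDer^{\Lie}(\Lieg)$, so that $V+W \subseteq \QQ\Gamma^{\Lie}(\Lieg) + \QDer^{\Lie}(\Lieg) \subseteq \GenDer^{\Lie}(\Lieg)$ by Lemma~\ref{lemma 1}(v).

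For closure under the bracket I would expand
\[
[V+W,\,V+W] = [V,V] + [V,W] + [W,V] + [W,W]
\]
and bound the four summands. The first is $[V,V] = W \subseteq V+W$ by definition of $W$. For the two middle terms, using $W \subseteq \QDer^{\Lie}(\Lieg)$ once more, Lemma~\ref{lemma 1}(ii) gives $[V,W] \subseteq [\QQ\Gamma^{\Lie}(\Lieg),\QDer^{\Lie}(\Lieg)] \subseteq \QQ\Gamma^{\Lie}(\Lieg) = V$ and symmetrically $[W,V] \subseteq \QQ\Gamma^{\Lie}(\Lieg) = V$, hence $[V,W] + [W,V] \subseteq V \subseteq V+W$.

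The only piece that is not pure bookkeeping is $[W,W]$, where I would invoke the Jacobi identity in $\End_{\K}(\Lieg)$: for $a,b,c,d \in V$, putting $u = [a,b] \in W \subseteq \QDer^{\Lie}(\Lieg)$, the previous paragraph yields $[u,c],[u,d]\in V$, and then
\[
\big[[a,b],[c,d]\big] = [u,[c,d]] = \big[[u,c],d\big] + \big[c,[u,d]\big] \in [V,V] + [V,V] = W.
\]
Since the elements $[a,b]$ span $W$ and the bracket is bilinear, this gives $[W,W] \subseteq W \subseteq V+W$. Assembling the four inclusions yields $[V+W,V+W] \subseteq V+W$, so $V+W$ is a subalgebra of $\GenDer^{\Lie}(\Lieg)$. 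I expect the $[W,W]$ step — the one genuinely using the Jacobi identity rather than a direct containment — to be the main obstacle; everything else is a recombination of the inclusions already recorded in Lemma~\ref{lemma 1}.
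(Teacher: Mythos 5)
Your proof is correct. For the containment $V+W\subseteq\GenDer^{\Lie}(\Lieg)$ and for the summands $[V,V]$, $[V,W]$, $[W,V]$ you follow the same route as the paper, combining parts (ii), (iii) and (v) of Lemma~\ref{lemma 1}. The genuine difference is your treatment of $[W,W]$, and there your argument is the stronger one. The paper's displayed chain bounds this summand by $\big[\QDer^{\Lie}(\Lieg),[\QDer^{\Lie}(\Lieg),\QDer^{\Lie}(\Lieg)]\big]$ and then asserts that the total lies in $\QQ\Gamma^{\Lie}(\Lieg)+[\QQ\Gamma^{\Lie}(\Lieg),\QQ\Gamma^{\Lie}(\Lieg)]$; but that enlarged term is only contained in $\QDer^{\Lie}(\Lieg)$, which need not sit inside $V+W$, so as printed the final inclusion is not justified for this summand. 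Your Jacobi computation --- writing $u=[a,b]\in W\subseteq\QDer^{\Lie}(\Lieg)$, noting $[u,c],[u,d]\in V$ by Lemma~\ref{lemma 1}(ii), and expanding $[u,[c,d]]=[[u,c],d]+[c,[u,d]]\in[V,V]=W$ --- supplies exactly the missing step, and bilinearity then gives $[W,W]\subseteq W$. In short, your proposal proves the proposition and at the same time repairs the one loose step in the published argument.
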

\begin{proof}
By Lemma~\ref{lemma 1}~(iii) and (v), we have:
\[
{\QQ\Gamma}^{\Lie}({\Lieg}) + [{\QQ\Gamma}^{\Lie}({\Lieg}), {\QQ\Gamma}^{\Lie}({\Lieg})] \subseteq {\QQ\Gamma}^{\Lie}({\Lieg}) + \QDer^{\Lie}({\Lieg}) \subseteq \GenDer^{\Lie}({\Lieg}).
\]

Now we show the bracket is closed. Indeed, keeping in mind Lemma~\ref{lemma 1}~(ii) and (iii), we have:
\begin{align*}
\big[{\QQ\Gamma}^{\Lie}({\Lieg}) + [{\QQ\Gamma}^{\Lie}({\Lieg}), {\QQ\Gamma}^{\Lie}({\Lieg})], {\QQ\Gamma}^{\Lie}({\Lieg}) + [{\QQ\Gamma}^{\Lie}({\Lieg}), {\QQ\Gamma}^{\Lie}({\Lieg})]\big] &\subseteq \\
\big[{\QQ\Gamma}^{\Lie}({\Lieg}) + \QDer^{\Lie}({\Lieg}), {\QQ\Gamma}^{\Lie}({\Lieg}) + [{\QQ\Gamma}^{\Lie}({\Lieg}), {\QQ\Gamma}^{\Lie}({\Lieg})]\big] &\subseteq \\
[{\QQ\Gamma}^{\Lie}({\Lieg}), {\QQ\Gamma}^{\Lie}({\Lieg})] + \big[{\QQ\Gamma}^{\Lie}({\Lieg}),[{\QQ\Gamma}^{\Lie}({\Lieg}),{\QQ\Gamma}^{\Lie}({\Lieg})]\big]+& \\
[\QDer^{\Lie}({\Lieg}), {\QQ\Gamma}^{\Lie}({\Lieg})] + \big[\QDer^{\Lie}({\Lieg}),[\QDer^{\Lie}({\Lieg}),\QDer^{\Lie}({\Lieg})]\big] &\subseteq \\
{\QQ\Gamma}^{\Lie}({\Lieg}) + [{\QQ\Gamma}^{\Lie}({\Lieg}), {\QQ\Gamma}^{\Lie}({\Lieg})].
\end{align*}
\end{proof}


\section{\texorpdfstring{Almost inner {$\Lie$}-derivations}{Almost inner Lie-derivations}}\label{almost}

In this section we study the context under which almost inner $\Lie$-derivations and central derivations coincide. It is interesting to note that the relative setting requires an additional condition with respect to the absolute context.

\begin{definition}
An \emph{almost inner $\Lie$-derivation} of a Leibniz algebra $\Lieg$ is a $\Lie$-derivation $d \colon {\Lieg} \to {\Lieg}$ such that $d(x) \in [x, \Lieg]_{\Lie}$, for all $x \in {\Lieg}$. We denote by ${\Der}_c^{\Lie}(\Lieg)$ the subspace of ${\Der}^{\Lie}(\Lieg)$ consisting in all almost inner $\Lie$-derivations of $\Lieg$, that is
\[
	{\Der}_c^{\Lie}(\Lieg) = \{ d \in \Der^{\Lie}(\Lieg) \mid d(x) \in [x, \Lieg]_{\Lie}, \text{ for all } x \in \Lieg \}.
\]
Let us denote by $T_c \left( \frac{\Lieg}{\ZLie(\Lieg)}, \gamma_2^{\Lie}(\Lieg) \right)$ the following vector space:
\[
\left \{ f \in \Hom_{\K} \left( \frac{\Lieg}{\ZLie(\Lieg)}, \gamma_2^{\Lie}(\Lieg) \right) \mid f\big(x+ \ZLie(\Lieg)\big) \in [x, {\Lieg}]_{\Lie}, ~\text{for all} ~x \in {\Lieg} \right \}.
\]
\end{definition}

\begin{proposition}
Let $\Lieg$ be a $\Lie$-nilpotent Leibniz algebra with class of $\Lie$-nilpo\-tency~$2$. Then ${\Der}_c^{\Lie}(\Lieg) \cong T_c \left( \frac{\Lieg}{\ZLie(\Lieg)}, \gamma_2^{\Lie}(\Lieg) \right).$
\end{proposition}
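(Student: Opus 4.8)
The plan is to build the isomorphism by hand as a pair of mutually inverse linear maps, the crux being a single structural observation drawn from the hypothesis. Since $\Lieg$ is $\Lie$-nilpotent of class $2$ we have $\gamma_3^{\Lie}(\Lieg) = [\gamma_2^{\Lie}(\Lieg), \Lieg]_{\Lie} = 0$; as the bracket $[-,-]_{\Lie}$ is symmetric (one has $[x,y]_{\Lie} = [x,y]+[y,x] = [y,x]_{\Lie}$), the vanishing of each generator $[w,x]_{\Lie}$ with $w \in \gamma_2^{\Lie}(\Lieg)$ and $x \in \Lieg$ says exactly that $\gamma_2^{\Lie}(\Lieg) \subseteq \ZLie(\Lieg)$. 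This is the only role the hypothesis plays, and it is what forces every cross-term below to collapse.

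First I would send $d \in \Der_c^{\Lie}(\Lieg)$ to an element of $T_c\big(\tfrac{\Lieg}{\ZLie(\Lieg)}, \gamma_2^{\Lie}(\Lieg)\big)$. Note that $d(x) \in [x,\Lieg]_{\Lie} \subseteq \gamma_2^{\Lie}(\Lieg)$ for all $x$, so $\Ima(d) \subseteq \gamma_2^{\Lie}(\Lieg)$; moreover, if $z \in \ZLie(\Lieg)$ then $[z,\Lieg]_{\Lie} = 0$ (again by symmetry of the bracket), hence $d(z) \in [z,\Lieg]_{\Lie} = 0$. Thus $d$ vanishes on $\ZLie(\Lieg)$ and factors uniquely as $d = \overline{d}\circ p$ through the projection $p\colon\Lieg \twoheadrightarrow \Lieg/\ZLie(\Lieg)$, with $\overline{d}\colon \Lieg/\ZLie(\Lieg) \to \gamma_2^{\Lie}(\Lieg)$ linear; and since $\overline{d}\big(x+\ZLie(\Lieg)\big) = d(x) \in [x,\Lieg]_{\Lie}$, we get $\overline{d} \in T_c\big(\tfrac{\Lieg}{\ZLie(\Lieg)}, \gamma_2^{\Lie}(\Lieg)\big)$. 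The assignment $d \mapsto \overline{d}$ is clearly linear.

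Conversely, given $f \in T_c\big(\tfrac{\Lieg}{\ZLie(\Lieg)}, \gamma_2^{\Lie}(\Lieg)\big)$ I would set $d := f \circ p$. Then $d$ is linear, $\Ima(d) \subseteq \gamma_2^{\Lie}(\Lieg) \subseteq \ZLie(\Lieg)$, and $d(x) = f\big(x+\ZLie(\Lieg)\big) \in [x,\Lieg]_{\Lie}$. It remains to see $d$ is a $\Lie$-derivation, which is immediate: for $x,y \in \Lieg$ both $[d(x),y]_{\Lie}$ and $[x,d(y)]_{\Lie}$ vanish because $\Ima(d) \subseteq \ZLie(\Lieg)$, while $[x,y]_{\Lie} \in \gamma_2^{\Lie}(\Lieg) \subseteq \ZLie(\Lieg)$ gives $d([x,y]_{\Lie}) = f(0) = 0$, so the identity $d([x,y]_{\Lie}) = [d(x),y]_{\Lie} + [x,d(y)]_{\Lie}$ holds. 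Hence $d \in \Der_c^{\Lie}(\Lieg)$, and $f \mapsto d$ is linear. The two maps are mutually inverse — $\overline{d}\circ p = d$ by construction, and $\overline{f\circ p} = f$ since $p$ is surjective — so they furnish the claimed isomorphism.

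I do not expect a genuine obstacle here: once one spots that class-$2$ $\Lie$-nilpotency gives $\gamma_2^{\Lie}(\Lieg) \subseteq \ZLie(\Lieg)$, the rest is bookkeeping about the factorisation through $\Lieg/\ZLie(\Lieg)$ together with the fact that any bracket in which a value of $d$ (or an element of $\gamma_2^{\Lie}(\Lieg)$) appears is zero. If one wants $\cong$ to be an isomorphism of Lie algebras rather than merely of vector spaces, one adds the remark that $\Der_c^{\Lie}(\Lieg)$ is abelian in this situation, since for $d, d' \in \Der_c^{\Lie}(\Lieg)$ the composite $d'\circ d$ sends $\Lieg$ into $\gamma_2^{\Lie}(\Lieg) \subseteq \ZLie(\Lieg)$ on which $d'$ vanishes, so every bracket $[d,d']$ is zero — matching the trivial bracket on the $\Hom$-space.
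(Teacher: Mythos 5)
Your proof is correct and follows essentially the same route as the paper: both send $d$ to the induced map $\psi_d$ on $\Lieg/\ZLie(\Lieg)$ and observe that class-$2$ $\Lie$-nilpotency forces $\gamma_2^{\Lie}(\Lieg)\subseteq \ZLie(\Lieg)$, making everything (well-definedness, the inverse map, the abelianness of both sides) collapse for trivial reasons. The paper's version is far terser — it asserts without detail that $\psi_d$ lies in $T_c$ and that $\Psi$ is an isomorphism of abelian algebras — so your write-up simply supplies the verifications it omits.
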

\begin{proof}
Let $d \in {\Der}_c^{\Lie}(\Lieg)$, the map $\psi_d \colon \frac{\Lieg}{\ZLie(\Lieg)} \to \gamma_2^{\Lie}(\Lieg)$, $\psi_d\big(x + \ZLie(\Lieg)\big) = d(x)$, is a linear map in $T_c \big( \frac{\Lieg}{\ZLie(\Lieg)}, \gamma_2^{\Lie}(\Lieg) \big)$.

Since $\gamma_3^{\Lie}(\Lieg) =0$, then ${\Der}_c^{\Lie}(\Lieg)$ is an abelian Leibniz algebra and the vector space $T_c \big( \frac{\Lieg}{\ZLie(\Lieg)}, \gamma_2^{\Lie}(\Lieg) \big)$ is also an abelian Leibniz algebra.

The map $\Psi \colon {\Der}_c^{\Lie}(\Lieg) \to T_c \left( \frac{\Lieg}{\ZLie(\Lieg)}, \gamma_2^{\Lie}(\Lieg) \right), \Psi(d) = \psi_d$, is an isomorphism of abelian Leibniz algebras.
\end{proof}

The reason we cannot study inner $\Lie$-derivations directly is because the {$\Lie$-adjoint} map is not a $\Lie$-derivation in general. Let $\Lieg$ be the free Leibniz algebra generated by $x$, $y$ and $z$. The map $d_x = [x, -]_{\Lie} \colon \Lieg \to \Lieg$ would be a $\Lie$-derivation if and only if the following identity was true:
\[
\big[x, [y, z]_{\Lie}\big]_{\Lie} = \big[[x, y]_{\Lie}, z\big]_{\Lie} + \big[y, [x, z]_{\Lie}\big]_{\Lie},
\]
which expanded together with Remark~\ref{R:identity} would mean that
\[
	\big[[y, z], x\big] + \big[[x, y], z\big] = \big[[y, x], z\big] + \big[[x, z], y\big] + \big[[z, x], y\big].
\]
Adding it to the same identity but interchanging $x$ and $y$, we obtain:
\[
	2\big[[x, y], z\big] + 2\big[[y, x], z\big] = 0.
\]
Therefore, it only makes sense to study inner $\Lie$-derivations in Leibniz algebras $\Lieg$ satisfying
\[
\big[[x, y]_{\Lie}, z\big] = 0
\]
for all $x, y, z \in \Lieg$, which is the same as saying that $\gamma_2^{\Lie}(\Lieg) \subseteq Z(\Lieg)$.
Note that by the symmetry of the $\Lie$-bracket it does not matter to consider left or right adjoint maps. Let $\Lieg$ be a Leibniz algebra satisfying $\gamma_2^{\Lie}(\Lieg) \subseteq Z(\Lieg)$. By $\IDer^{\Lie} = \{d_x \colon x \in {\Lieg} \}$ we denote the set of all inner $\Lie$-derivations of $\Lieg$.

\begin{example} \label{ex 1}
The three-dimensional Leibniz algebra with basis $\{a_1, a_2, a_3 \}$ and bracket operation $[a_3, a_3] = a_1$ and zero elsewhere (class 2 (b) in~\cite{CILL}) is a Leibniz algebra satisfying $\gamma_2^{\Lie}(\Lieg) \subseteq Z(\Lieg)$.
\end{example}

Now we are going to explore the similarities between ${\Der}_c^{\Lie}(\Lieg)$ and ${\Der}_z^{\Lie}(\Lieg)$.

\begin{theorem} \label{equal}
Let $\Lieg$ be a finite-dimensional non-abelian Leibniz algebra satisfying~$\gamma_2^{\Lie}(\Lieg) \subseteq Z(\Lieg)$. Then ${\Der}_c^{\Lie}(\Lieg) = {\Der}_z^{\Lie}(\Lieg)$ if and only if $\ZLie(\Lieg) = \gamma_2^{\Lie}(\Lieg)$ and~${\Der}_c^{\Lie}(\Lieg) \cong T \left( \frac{\Lieg}{\ZLie(\Lieg)}, \gamma_2^{\Lie}(\Lieg) \right)$.
\end{theorem}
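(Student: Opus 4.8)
The plan is to reduce both implications of the equivalence to a single $\Hom$-space identification. Throughout, the standing hypothesis $\gamma_2^{\Lie}(\Lieg)\subseteq Z(\Lieg)$ is in force. First I would record two elementary facts. Since $Z(\Lieg)=Z^l(\Lieg)\cap Z^r(\Lieg)\subseteq\ZLie(\Lieg)$, we have $\gamma_2^{\Lie}(\Lieg)\subseteq\ZLie(\Lieg)$, and for any $d\in{\Der}_c^{\Lie}(\Lieg)$ we get $d(x)\in[x,\Lieg]_{\Lie}\subseteq\gamma_2^{\Lie}(\Lieg)\subseteq\ZLie(\Lieg)$, so ${\Der}_c^{\Lie}(\Lieg)\subseteq{\Der}_z^{\Lie}(\Lieg)$ always; hence the asserted equality is equivalent to the reverse inclusion. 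Second, since every central $\Lie$-derivation annihilates $\gamma_2^{\Lie}(\Lieg)$ and maps $\Lieg$ into $\ZLie(\Lieg)$, the assignment $d\mapsto\overline d$ with $\overline d\big(x+\gamma_2^{\Lie}(\Lieg)\big)=d(x)$ is a well-defined, linear, injective map ${\Der}_z^{\Lie}(\Lieg)\to\Hom_{\K}\big(\Lieg/\gamma_2^{\Lie}(\Lieg),\ZLie(\Lieg)\big)$; it is surjective because any linear $g$ there lifts to $d\colon\Lieg\twoheadrightarrow\Lieg/\gamma_2^{\Lie}(\Lieg)\xrightarrow{g}\ZLie(\Lieg)\hookrightarrow\Lieg$, and this $d$ is a $\Lie$-derivation since $d([x,y]_{\Lie})=0$ (as $[x,y]_{\Lie}\in\gamma_2^{\Lie}(\Lieg)$) and $[d(x),y]_{\Lie}+[x,d(y)]_{\Lie}=0$ (as $d(x),d(y)\in\ZLie(\Lieg)$ and $[-,-]_{\Lie}$ is symmetric). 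Thus ${\Der}_z^{\Lie}(\Lieg)\cong\Hom_{\K}\big(\Lieg/\gamma_2^{\Lie}(\Lieg),\ZLie(\Lieg)\big)$.

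For the implication ``$\Rightarrow$'', assume ${\Der}_c^{\Lie}(\Lieg)={\Der}_z^{\Lie}(\Lieg)$ and suppose, towards a contradiction, that some $z_0\in\ZLie(\Lieg)\setminus\gamma_2^{\Lie}(\Lieg)$ exists. Since $\Lieg$ is non-abelian we have $Z(\Lieg)\subsetneq\Lieg$, hence $\gamma_2^{\Lie}(\Lieg)\subsetneq\Lieg$; choose $x_0\in\Lieg\setminus\gamma_2^{\Lie}(\Lieg)$, so that $x_0+\gamma_2^{\Lie}(\Lieg)\neq 0$ in $\Lieg/\gamma_2^{\Lie}(\Lieg)$, and pick a linear map $g\colon\Lieg/\gamma_2^{\Lie}(\Lieg)\to\ZLie(\Lieg)$ with $g\big(x_0+\gamma_2^{\Lie}(\Lieg)\big)=z_0$. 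The corresponding $d\in{\Der}_z^{\Lie}(\Lieg)$ then satisfies $d(x_0)=z_0\notin\gamma_2^{\Lie}(\Lieg)\supseteq[x_0,\Lieg]_{\Lie}$, so $d\notin{\Der}_c^{\Lie}(\Lieg)$, contradicting the hypothesis. Hence $\ZLie(\Lieg)=\gamma_2^{\Lie}(\Lieg)$, and substituting this into the isomorphism above yields ${\Der}_c^{\Lie}(\Lieg)={\Der}_z^{\Lie}(\Lieg)\cong\Hom_{\K}\big(\Lieg/\ZLie(\Lieg),\gamma_2^{\Lie}(\Lieg)\big)=T\big(\Lieg/\ZLie(\Lieg),\gamma_2^{\Lie}(\Lieg)\big)$, where $T(-,-)$ denotes the full space of linear maps, i.e.\ the analogue of $T_c(-,-)$ without the membership constraint.

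For ``$\Leftarrow$'', assume $\ZLie(\Lieg)=\gamma_2^{\Lie}(\Lieg)$ and ${\Der}_c^{\Lie}(\Lieg)\cong T\big(\Lieg/\ZLie(\Lieg),\gamma_2^{\Lie}(\Lieg)\big)$. Then the isomorphism of the first step reads ${\Der}_z^{\Lie}(\Lieg)\cong\Hom_{\K}\big(\Lieg/\gamma_2^{\Lie}(\Lieg),\gamma_2^{\Lie}(\Lieg)\big)=T\big(\Lieg/\ZLie(\Lieg),\gamma_2^{\Lie}(\Lieg)\big)$, so that $\dim{\Der}_c^{\Lie}(\Lieg)=\dim{\Der}_z^{\Lie}(\Lieg)$; together with ${\Der}_c^{\Lie}(\Lieg)\subseteq{\Der}_z^{\Lie}(\Lieg)$ and the finite-dimensionality of $\Lieg$, this forces ${\Der}_c^{\Lie}(\Lieg)={\Der}_z^{\Lie}(\Lieg)$. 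I expect the only genuine obstacle to be the first step, namely checking that an arbitrary homomorphism $\Lieg/\gamma_2^{\Lie}(\Lieg)\to\ZLie(\Lieg)$ really lifts to a central $\Lie$-derivation; this is precisely where the symmetry of the $\Lie$-bracket and the inclusion $\gamma_2^{\Lie}(\Lieg)\subseteq\ZLie(\Lieg)$ are used. Everything else is formal, and the argument is unaffected in the degenerate case $\gamma_2^{\Lie}(\Lieg)=0$, where $\ZLie(\Lieg)=\Lieg\neq 0$ and both sides of the equivalence fail.
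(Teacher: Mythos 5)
Your proof is correct, and its backbone coincides with the paper's: both arguments rest on the identification ${\Der}_z^{\Lie}(\Lieg)\cong T\big(\Lieg/\gamma_2^{\Lie}(\Lieg),\ZLie(\Lieg)\big)$ (this is exactly \cite[Lemma 3.5]{BCP}, which the paper cites and you re-derive; your check that an arbitrary linear map $\Lieg/\gamma_2^{\Lie}(\Lieg)\to\ZLie(\Lieg)$ lifts to a $\Lie$-central derivation, using the symmetry of $[-,-]_{\Lie}$ and $\gamma_2^{\Lie}(\Lieg)\subseteq\ZLie(\Lieg)$, is precisely the content of that lemma), and your converse direction is the paper's argument verbatim: equality of dimensions plus the inclusion ${\Der}_c^{\Lie}(\Lieg)\subseteq{\Der}_z^{\Lie}(\Lieg)$. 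Where you genuinely diverge is the forward implication. The paper first shows that $d\mapsto\psi_d$ gives an isomorphism ${\Der}_c^{\Lie}(\Lieg)\cong T\big(\Lieg/\ZLie(\Lieg),\gamma_2^{\Lie}(\Lieg)\big)$ (whose surjectivity silently uses the hypothesis ${\Der}_c^{\Lie}={\Der}_z^{\Lie}$) and then compares $\dim T\big(\Lieg/\ZLie(\Lieg),\gamma_2^{\Lie}(\Lieg)\big)$ with $\dim T\big(\Lieg/\gamma_2^{\Lie}(\Lieg),\ZLie(\Lieg)\big)$, deducing $\gamma_2^{\Lie}(\Lieg)=\ZLie(\Lieg)$ from the inclusion by a dimension count. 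You instead produce, for any hypothetical $z_0\in\ZLie(\Lieg)\setminus\gamma_2^{\Lie}(\Lieg)$, an explicit element of ${\Der}_z^{\Lie}(\Lieg)\setminus{\Der}_c^{\Lie}(\Lieg)$, and only afterwards obtain the isomorphism with $T\big(\Lieg/\ZLie(\Lieg),\gamma_2^{\Lie}(\Lieg)\big)$ by substituting $\ZLie(\Lieg)=\gamma_2^{\Lie}(\Lieg)$. This is a legitimate and arguably cleaner variant: it avoids the dimension bookkeeping in that direction (so finite-dimensionality is only needed for the converse), it makes explicit the surjectivity that the paper dismisses as ``clear'', and it isolates where non-abelianity enters (to guarantee $\gamma_2^{\Lie}(\Lieg)\subsetneq\Lieg$, via $\gamma_2^{\Lie}(\Lieg)\subseteq Z(\Lieg)\subsetneq\Lieg$). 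Your closing observations --- that the standing hypothesis is used only through $\gamma_2^{\Lie}(\Lieg)\subseteq Z(\Lieg)\subseteq\ZLie(\Lieg)$, and that the statement is vacuously consistent when $\gamma_2^{\Lie}(\Lieg)=0$ --- are also correct.
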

\begin{proof}
Assume that ${\Der}_c^{\Lie}(\Lieg) = {\Der}_z^{\Lie}(\Lieg)$.
For any $d \in {\Der}_c^{\Lie}(\Lieg)$, the homomorphism
\[
\psi_d \colon \frac{\Lieg}{\ZLie(\Lieg)} \to \gamma_2^{\Lie}(\Lieg), \qquad \psi_d\big(x + \ZLie(\Lieg)\big) = d(x),
\]
is a linear map. Now we define
\[
\Psi \colon {\Der}_c^{\Lie}(\Lieg) \to T \left( \frac{\Lieg}{\ZLie(\Lieg)}, \gamma_2^{\Lie}(\Lieg) \right), \qquad \Psi(d) = \psi_d.
\]
Clearly $\Psi$ is an isomorphism of Leibniz algebras.

Now, by the isomorphism of $\K$-vector spaces given in \cite[Lemma 3.5]{BCP}, we have
\begin{align*}
	\dim \left( T \left( \frac{\Lieg}{\ZLie(\Lieg)}, \gamma_2^{\Lie}(\Lieg) \right) \right) &= \dim \left( {\Der}_c^{\Lie}(\Lieg) \right) = \dim \left( {\Der}_z^{\Lie}(\Lieg) \right) \\
	{}&= \dim \left( T \left( \frac{\Lieg}{\gamma_2^{\Lie}(\Lieg)}, \ZLie(\Lieg) \right) \right)
\end{align*}
Therefore, since $\gamma_2^{\Lie}(\Lieg) \subseteq \ZLie(\Lieg)$, by a linear algebra argument concerning dimensions we know that $\gamma_2^{\Lie}(\Lieg) = \ZLie(\Lieg)$.

Conversely, assume that $\gamma_2^{\Lie}(\Lieg) = \ZLie(\Lieg)$ and ${\Der}_c^{\Lie}(\Lieg) \cong T \left( \frac{g}{\ZLie(\Lieg)}, \gamma_2^{\Lie}(\Lieg) \right)$.
It is clear that ${\Der}_c^{\Lie}(\Lieg) \subseteq {\Der}_z^{\Lie}(\Lieg)$.
On the other hand, due to~\cite[Lemma 3.5]{BCP}, we have
\begin{align*}
\dim \left( {\Der}_z^{\Lie}(\Lieg) \right) =& \dim \left( T \left( \frac{\Lieg}{\gamma_2^{\Lie}(\Lieg)}, \ZLie(\Lieg) \right) \right) = \dim \left( T \left( \frac{\Lieg}{\ZLie(\Lieg)}, \gamma_2^{\Lie}(\Lieg) \right) \right) \\
{}=& \dim \left( {\Der}_c^{\Lie}(\Lieg) \right)
\end{align*}
which completes the proof.
\end{proof}

\begin{corollary} \label{5.10}
Let $\Lieg$ be a finite-dimensional non-abelian Leibniz algebra satisfying~$\gamma_2^{\Lie}(\Lieg) \subseteq Z(\Lieg)$ such that $\dim \left( \ZLie(\Lieg) \right) = 1$. Then ${\Der}_c^{\Lie}(\Lieg) = {\Der}_z^{\Lie}(\Lieg)$ if and only if $\ZLie(\Lieg) = \gamma_2^{\Lie}(\Lieg)$.
\end{corollary}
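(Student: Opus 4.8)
The plan is to use Theorem~\ref{equal}. The forward implication is immediate: if ${\Der}_c^{\Lie}(\Lieg) = {\Der}_z^{\Lie}(\Lieg)$, then Theorem~\ref{equal} gives $\ZLie(\Lieg) = \gamma_2^{\Lie}(\Lieg)$, and the hypothesis $\dim(\ZLie(\Lieg)) = 1$ is not even used here. For the converse, Theorem~\ref{equal} tells us that, on top of the assumption $\ZLie(\Lieg) = \gamma_2^{\Lie}(\Lieg)$, it would suffice to establish the isomorphism ${\Der}_c^{\Lie}(\Lieg) \cong T(\Lieg/\ZLie(\Lieg), \gamma_2^{\Lie}(\Lieg))$; however, I would rather prove ${\Der}_c^{\Lie}(\Lieg) = {\Der}_z^{\Lie}(\Lieg)$ directly, since this makes transparent exactly where $\dim(\ZLie(\Lieg)) = 1$ intervenes.

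So assume $\ZLie(\Lieg) = \gamma_2^{\Lie}(\Lieg)$; since $\dim(\ZLie(\Lieg)) = 1$, also $\dim(\gamma_2^{\Lie}(\Lieg)) = 1$. I would prove the two inclusions separately. For ${\Der}_c^{\Lie}(\Lieg) \subseteq {\Der}_z^{\Lie}(\Lieg)$: if $d \in {\Der}_c^{\Lie}(\Lieg)$, then $d(x) \in [x,\Lieg]_{\Lie} \subseteq \gamma_2^{\Lie}(\Lieg) = \ZLie(\Lieg)$ for all $x \in \Lieg$, so $\Ima(d) \subseteq \ZLie(\Lieg)$ and $d$ is a $\Lie$-central derivation. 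For the reverse inclusion, let $d \in {\Der}_z^{\Lie}(\Lieg)$; recall that then $\Ima(d) \subseteq \ZLie(\Lieg) = \gamma_2^{\Lie}(\Lieg)$ and $d$ annihilates $\gamma_2^{\Lie}(\Lieg) = \ZLie(\Lieg)$. Fix $x \in \Lieg$. If $x \in \ZLie(\Lieg)$, then $d(x) = 0 \in [x,\Lieg]_{\Lie}$. If $x \notin \ZLie(\Lieg)$, then, by the definition of the $\Lie$-centre together with the symmetry of the $\Lie$-bracket, there exists $y \in \Lieg$ with $[x,y]_{\Lie} \neq 0$; hence $[x,\Lieg]_{\Lie}$ is a nonzero vector subspace of the one-dimensional space $\gamma_2^{\Lie}(\Lieg)$, so $[x,\Lieg]_{\Lie} = \gamma_2^{\Lie}(\Lieg)$, and therefore $d(x) \in \Ima(d) \subseteq \gamma_2^{\Lie}(\Lieg) = [x,\Lieg]_{\Lie}$. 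In either case $d(x) \in [x,\Lieg]_{\Lie}$, so $d \in {\Der}_c^{\Lie}(\Lieg)$. Together the two inclusions give ${\Der}_c^{\Lie}(\Lieg) = {\Der}_z^{\Lie}(\Lieg)$.

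The single load-bearing step is the identification $[x,\Lieg]_{\Lie} = \gamma_2^{\Lie}(\Lieg)$ for every $x \notin \ZLie(\Lieg)$, which is precisely where $\dim(\ZLie(\Lieg)) = 1$ is used: it upgrades the tautological inclusion $[x,\Lieg]_{\Lie} \subseteq \gamma_2^{\Lie}(\Lieg)$ to an equality, and without it one is left exactly with the residual hypothesis of Theorem~\ref{equal}; everything else is bookkeeping. If one prefers to stay within the framework of Theorem~\ref{equal}, the same observation shows that the canonical injective map $\Psi \colon {\Der}_c^{\Lie}(\Lieg) \to T(\Lieg/\ZLie(\Lieg), \gamma_2^{\Lie}(\Lieg))$, $d \mapsto \psi_d$, is also surjective under our hypotheses: for $f$ in the target, its lift $d(x) = f(x + \ZLie(\Lieg))$ is a $\Lie$-derivation because $\gamma_2^{\Lie}(\Lieg) = \ZLie(\Lieg)$, and it is almost inner by the same case distinction on $x$; hence the extra hypothesis of Theorem~\ref{equal} holds automatically and we conclude.
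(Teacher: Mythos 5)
Your proof is correct, and for the substantive implication it takes a genuinely different route from the paper's. The paper proves $\ZLie(\Lieg)=\gamma_2^{\Lie}(\Lieg)\Rightarrow{\Der}_c^{\Lie}(\Lieg)={\Der}_z^{\Lie}(\Lieg)$ by bringing in the inner $\Lie$-derivations: it uses $\Lieg/\ZLie(\Lieg)\cong\IDer^{\Lie}(\Lieg)$ together with the dimension formula $\dim\big({\Der}_z^{\Lie}(\Lieg)\big)=\dim\big(\Lieg/\gamma_2^{\Lie}(\Lieg)\big)\cdot\dim\big(\ZLie(\Lieg)\big)$ from \cite{BCP} to conclude $\dim\big({\Der}_z^{\Lie}(\Lieg)\big)=\dim\big(\IDer^{\Lie}(\Lieg)\big)$, and then sandwiches ${\Der}_c^{\Lie}(\Lieg)\subseteq{\Der}_z^{\Lie}(\Lieg)=\IDer^{\Lie}(\Lieg)\subseteq{\Der}_c^{\Lie}(\Lieg)$. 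You replace this dimension count by the elementwise observation that, for $x\notin\ZLie(\Lieg)$, the subspace $[x,\Lieg]_{\Lie}$ is a nonzero subspace of the one-dimensional space $\gamma_2^{\Lie}(\Lieg)$ and hence equals it, which gives ${\Der}_z^{\Lie}(\Lieg)\subseteq{\Der}_c^{\Lie}(\Lieg)$ directly; the reverse inclusion and the case $x\in\ZLie(\Lieg)$ (where $d(x)=0$ because $\Lie$-central derivations annihilate $\gamma_2^{\Lie}(\Lieg)=\ZLie(\Lieg)$) are routine, and the forward implication is, in both treatments, an immediate consequence of Theorem~\ref{equal}. What your route buys: it pinpoints exactly where $\dim\big(\ZLie(\Lieg)\big)=1$ intervenes, it avoids invoking $\IDer^{\Lie}(\Lieg)$ and the auxiliary isomorphism with $T\left(\frac{\Lieg}{\gamma_2^{\Lie}(\Lieg)},\ZLie(\Lieg)\right)$ altogether, and it does not use finite-dimensionality of $\Lieg$ in this direction (that hypothesis is only needed for the other implication, via Theorem~\ref{equal}). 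Your closing remark, that the same case distinction shows $\Psi$ is surjective onto $T\left(\frac{\Lieg}{\ZLie(\Lieg)},\gamma_2^{\Lie}(\Lieg)\right)$ so that the residual hypothesis of Theorem~\ref{equal} holds automatically, is also correct and reconciles the two approaches.
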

\begin{proof}
If $\ZLie(\Lieg) = \gamma_2^{\Lie}(\Lieg)$, then $\IDer^{\Lie}(\Lieg)$ and ${\Der}_c^{\Lie}(\Lieg)$ are subalgebras of ${\Der}_z^{\Lie}(\Lieg)$. Since $\frac{\Lieg}{\ZLie(\Lieg)} \cong \IDer^{\Lie}(\Lieg)$ and by~\cite[Lemma 3.5]{BCP}, we have
\[
	\dim \left( {\Der}_z^{\Lie}(\Lieg) \right) = \dim \left( T \left( \frac{g}{\gamma_2^{\Lie}(\Lieg)}, \ZLie(\Lieg) \right) \right),
\]
hence
\begin{align*}
	\dim \left( {\Der}_z^{\Lie}(\Lieg) \right) &= \dim \left( \frac{\Lieg}{\gamma_2^{\Lie}(\Lieg)} \right) \cdot \dim \left( \ZLie(\Lieg) \right) \\ &= \dim \left( \frac{\Lieg}{\ZLie(\Lieg)} \right) = \dim \left( \IDer^{\Lie}(\Lieg) \right).
\end{align*}
Consequently, ${\Der}_c^{\Lie}(\Lieg) \subseteq {\Der}_z^{\Lie}(\Lieg) = \IDer^{\Lie}(\Lieg) \subseteq {\Der}_c^{\Lie}(\Lieg)$.

The converse is just an immediate consequence of Theorem~\ref{equal}.
\end{proof}

\begin{example} \label{5.7}
The three-dimensional Leibniz algebra with basis $\{ a_1, a_2, a_3 \}$ and bracket operation given by $[a_2,a_2]=[a_3,a_3]=a_1$ and zero elsewhere (class 2 (c) in~\cite{CILL}), is a Leibniz algebra satisfying the requirements of Corollary~\ref{5.10}.
\end{example}

\section{\texorpdfstring{$\Lie$-centroid of a tensor product}{Lie-centroid of a tensor product}}\label{tensor}

In this section we analyse some properties concerning the $\Lie$-centroid of the Leibniz algebra tensor product of a commutative associative algebra and a Leibniz algebra.

Let $A$ be a commutative associative algebra over $\K$. The \emph{centroid of $A$} is the associative subalgebra of ${\End_{\K}}(A)$
\[
	\Gamma(A) = \{ f \in {\End_{\K}}(A) \mid f(ab) = f(a)b = af(b) \}.
\]

For a Leibniz algebra $\Lieg$, let $A \otimes {\Lieg}$ be the tensor product over $\K$ of the underlying vector spaces of $A$ and {\Lieg}. Then $A \otimes {\Lieg}$ can be endowed with a structure of Leibniz algebra with the operation
\[
	[a_1 \otimes g_1, a_2 \otimes g_2] = (a_1 a_2) \otimes [g_1, g_2]
\]
for all $a_1, a_2 \in A$, $g_1, g_2 \in {\Lieg}$. This Leibniz algebra is called the \emph{tensor product Leibniz algebra of $A$ and $\Lieg$}.

Given $f \in {\End}_{\K}(A)$, $\varphi \in {\End}_{\K}(\Lieg)$, the map $f \widetilde{\otimes} \varphi \colon A \otimes {\Lieg} \to A \otimes {\Lieg}$ defined by~$f \widetilde{\otimes} \varphi(a \otimes g) = f(a) \otimes \varphi(g)$ is an endomorphism of the Leibniz algebras.

\begin{corollary}
Let $\Lieg$ be a finite-dimensional non-abelian Leibniz algebra satisfying~$\gamma_2^{\Lie}(\Lieg) \subseteq Z(\Lieg)$ such that $\dim \left( \ZLie(\Lieg) \right) = 1$. Then ${\Der}_c^{\Lie}(\Lieg) = {\Der}^{\Lie}(\Lieg) \cap \Gamma^{\Lie}({\Lieg})$ if and only if $\ZLie(\Lieg) = \gamma_2^{\Lie}(\Lieg)$.
\end{corollary}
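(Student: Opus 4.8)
The plan is to recognise that this corollary is just a restatement of Corollary~\ref{5.10}, obtained by rewriting one side of the equality through Proposition~\ref{intersection}. Recall that Proposition~\ref{intersection} (i.e.\ \cite[Proposition~4.2]{BCP}) asserts, for \emph{any} Leibniz algebra, that $\Der_z^{\Lie}(\Lieg) = \Der^{\Lie}(\Lieg) \cap \Gamma^{\Lie}(\Lieg)$. Consequently, the statement ${\Der}_c^{\Lie}(\Lieg) = {\Der}^{\Lie}(\Lieg) \cap \Gamma^{\Lie}({\Lieg})$ is \emph{literally} the statement ${\Der}_c^{\Lie}(\Lieg) = {\Der}_z^{\Lie}(\Lieg)$, with no further argument needed.

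From there I would simply invoke Corollary~\ref{5.10}. Its hypotheses --- $\Lieg$ finite-dimensional, non-abelian, $\gamma_2^{\Lie}(\Lieg) \subseteq Z(\Lieg)$, and $\dim\!\left(\ZLie(\Lieg)\right) = 1$ --- are exactly the standing hypotheses of the present corollary, so Corollary~\ref{5.10} applies verbatim and gives ${\Der}_c^{\Lie}(\Lieg) = {\Der}_z^{\Lie}(\Lieg)$ if and only if $\ZLie(\Lieg) = \gamma_2^{\Lie}(\Lieg)$. Chaining the two equivalences --- the trivial one coming from Proposition~\ref{intersection} and the substantive one coming from Corollary~\ref{5.10} --- yields the claim in both directions at once.

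There is essentially no obstacle: all the genuine content (the dimension bookkeeping via \cite[Lemma~3.5]{BCP}, the comparison of the relevant $T$-spaces, and the use of $\IDer^{\Lie}(\Lieg) \cong \Lieg/\ZLie(\Lieg)$) has already been carried out in the proofs of Theorem~\ref{equal} and Corollary~\ref{5.10}, and the identification of $\Der^{\Lie}(\Lieg) \cap \Gamma^{\Lie}(\Lieg)$ with $\Der_z^{\Lie}(\Lieg)$ is already recorded. The only point requiring any care is to confirm that the hypotheses of Corollary~\ref{5.10} are met so that it can be applied directly, which is immediate since they coincide with those assumed here.
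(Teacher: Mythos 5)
Your proposal is correct and is essentially identical to the paper's own proof: both identify $\Der^{\Lie}(\Lieg) \cap \Gamma^{\Lie}(\Lieg)$ with $\Der_z^{\Lie}(\Lieg)$ via Proposition~\ref{intersection} (i.e.\ \cite[Proposition~4.2]{BCP}) and then invoke Corollary~\ref{5.10} under the same hypotheses. Nothing further is needed.
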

\begin{proof}
By Corollary~\ref{5.10}, we know that ${\Der}_c^{\Lie}(\Lieg) ={\Der}_z^{\Lie}(\Lieg)$. Then~\cite[Proposition~4.2]{BCP} concludes the proof.
\end{proof}

\begin{lemma} \label{4.1}
We have the following inclusion:
\[
	\Gamma(A) \otimes \Gamma^{\Lie}(\Lieg) \subseteq \Gamma^{\Lie}(A \otimes \Lieg).
\]
\end{lemma}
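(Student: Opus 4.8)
The statement is an inclusion of subspaces of $\End_{\K}(A\otimes\Lieg)$, so the plan is to pick a typical element of the left-hand side and verify it satisfies the defining identities of $\Gamma^{\Lie}(A\otimes\Lieg)$. Since $\Gamma(A)\otimes\Gamma^{\Lie}(\Lieg)$ is spanned by the elementary tensors $f\otimes\varphi$ with $f\in\Gamma(A)$ and $\varphi\in\Gamma^{\Lie}(\Lieg)$, and $\Gamma^{\Lie}(A\otimes\Lieg)$ is a linear subspace (indeed a subalgebra, by Proposition~\ref{intersection}), it suffices to show that the endomorphism $f\,\widetilde{\otimes}\,\varphi$ of $A\otimes\Lieg$ lies in $\Gamma^{\Lie}(A\otimes\Lieg)$ for each such pair. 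This is the only real content; everything else is bookkeeping about how $\widetilde{\otimes}$ interacts with sums.

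\textbf{Key steps.} First I would record that the $\Lie$-bracket on $A\otimes\Lieg$ is given on elementary tensors by $[a_1\otimes g_1,a_2\otimes g_2]_{\Lie}=(a_1a_2)\otimes[g_1,g_2]_{\Lie}$, since $(a_1a_2)\otimes[g_1,g_2]+(a_2a_1)\otimes[g_2,g_1]=(a_1a_2)\otimes[g_1,g_2]_{\Lie}$ using commutativity of $A$. Then, for $f\in\Gamma(A)$, $\varphi\in\Gamma^{\Lie}(\Lieg)$ and elementary tensors $a_1\otimes g_1$, $a_2\otimes g_2$, I would compute the three terms that must coincide:
\begin{align*}
(f\,\widetilde{\otimes}\,\varphi)\big([a_1\otimes g_1,a_2\otimes g_2]_{\Lie}\big) &= f(a_1a_2)\otimes\varphi\big([g_1,g_2]_{\Lie}\big),\\
\big[(f\,\widetilde{\otimes}\,\varphi)(a_1\otimes g_1),\,a_2\otimes g_2\big]_{\Lie} &= \big(f(a_1)a_2\big)\otimes\big[\varphi(g_1),g_2\big]_{\Lie},\\
\big[a_1\otimes g_1,\,(f\,\widetilde{\otimes}\,\varphi)(a_2\otimes g_2)\big]_{\Lie} &= \big(a_1f(a_2)\big)\otimes\big[g_1,\varphi(g_2)\big]_{\Lie}.
\end{align*}
By the centroid identity for $f$, $f(a_1a_2)=f(a_1)a_2=a_1f(a_2)$, and by the $\Lie$-centroid identity for $\varphi$, $\varphi([g_1,g_2]_{\Lie})=[\varphi(g_1),g_2]_{\Lie}=[g_1,\varphi(g_2)]_{\Lie}$; hence the three expressions agree. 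Finally, since a general element $u\in A\otimes\Lieg$ is a finite sum of elementary tensors and all three maps in the defining identity are bilinear in the two arguments and additive in $f\,\widetilde{\otimes}\,\varphi$, the identities extend from elementary tensors to all of $A\otimes\Lieg$; thus $f\,\widetilde{\otimes}\,\varphi\in\Gamma^{\Lie}(A\otimes\Lieg)$, and by linearity the whole subspace $\Gamma(A)\otimes\Gamma^{\Lie}(\Lieg)$ is contained in $\Gamma^{\Lie}(A\otimes\Lieg)$.

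\textbf{Main obstacle.} There is no deep obstacle here; the one point requiring care is the passage from elementary tensors to arbitrary elements, because the image of $\Gamma(A)\otimes\Gamma^{\Lie}(\Lieg)$ inside $\End_{\K}(A\otimes\Lieg)$ under $f\otimes\varphi\mapsto f\,\widetilde{\otimes}\,\varphi$ need not consist only of decomposable maps, so one must check that the identity $d([x,y]_{\Lie})=[d(x),y]_{\Lie}=[x,d(y)]_{\Lie}$ is stable under $\K$-linear combinations of the $d$'s — which it plainly is, since each side is $\K$-linear in $d$. It is worth noting that this lemma only gives an inclusion; whether it is an equality (the analogue of the classical fact $\Gamma(A\otimes B)\cong\Gamma(A)\otimes\Gamma(B)$ under suitable hypotheses) is presumably taken up in the sequel, and typically requires finiteness and perfectness-type assumptions on $\Lieg$ together with unitality of $A$.
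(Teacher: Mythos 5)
Your proposal is correct and follows essentially the same route as the paper: verify the defining identities for the elementary endomorphisms $f\,\widetilde{\otimes}\,\varphi$ on elementary tensors, using commutativity of $A$ together with the centroid identities for $f$ and $\varphi$, and extend by linearity. The paper's proof is exactly this computation (it only writes out one of the two equalities and leaves the linearity bookkeeping implicit, which you make explicit).
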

\begin{proof} For any $f \in \Gamma(A)$ and $\varphi \in \Gamma^{\Lie}(\Lieg)$ we have:
\begin{align*}
f \widetilde{\otimes} \varphi([a_1 \otimes g_1, a_2 \otimes g_2]_{\Lie}) &= f \widetilde{\otimes} \varphi \big((a_1a_2) \otimes [g_1, g_2] + (a_2 a_1) \otimes [g_2, g_1]\big)\\
{}&= f(a_1 a_2) \otimes \varphi([g_1,g_2]_{\Lie}) = (f(a_1) a_2) \otimes [\varphi(g_1),g_2]_{\Lie}\\
{}&= [f(a_1) \otimes \varphi(g_1), a_2 \otimes g_2]_{\Lie} \\
{}&= [f \widetilde{\otimes} \varphi(a_1 \otimes g_1), a_2 \otimes g_2]_{\Lie}.
\end{align*}

The other equality can be checked in a similar way.
\end{proof}

The following example shows that the converse inclusion does not hold in general.

\begin{example}
Let $A$ be the two-dimensional commutative associative algebra with basis $\{e_1, e_2 \}$ and product $e_1.e_1 = e_1$, $e_1.e_2 = e_2.e_1 = e_2$, $e_2.e_2=0$ (see class~${As}_2^4$ in~\cite{RRB}) and let $\Lieg$ be the two-dimensional Leibniz algebra with basis $\{a_1, a_2 \}$ and bracket operation given by $[a_1, a_2]=a_1$ and zero elsewhere~\cite{Cu}. The $\Lie$-centroid of~$A \otimes {\Lieg}$ are the linear maps represented by a matrix of the form $\left( \begin{array}{cccc} \lambda & 0 & 0 & 0\\ 0 & \lambda & 0 & 0\\ \mu& 0 & \lambda & 0 \\ 0 & \mu & 0 & \lambda \end{array} \right)$.

The element of~$(A \otimes {\Lieg})$ represented by the matrix $\left( \begin{array}{cccc} 1 & 0 & 0 & 0\\ 0 & 1 & 0 & 0\\ 1& 0 & 1 & 0 \\ 0 & 1 & 0 & 1 \end{array} \right)$ cannot be obtained as the tensor product of an element $\Gamma(A)$, whose associated matrix has the form $\left( \begin{array}{cc} \lambda & 0 \\ \mu& \lambda \end{array} \right)$, and an element of $\Gamma^{\Lie}({\Lieg})$, whose associated matrix is of the form $\left( \begin{array}{cc} 0 & 0 \\ 0& \delta \end{array} \right)$.
\end{example}

If $A$ is unital, then the isomorphism $\sigma \colon \Gamma(A) \to A$ given by $\sigma(f) = f(1)$, implies that $\Gamma(A) \cong A$

From now on we assume that $\Lieg$ is a Leibniz algebra over an algebraically closed field $\K$ of characteristic zero and $A$ is a unital commutative associative algebra over~$\K$.

Let $\Psi$ denote the subspace of $\Gamma^{\Lie}(\Lieg)$ spanned by $\{ \varphi_j \}_{j \in J}$, a maximal subset of~$\Gamma^{\Lie}(\Lieg)$ such that $\{ \varphi_{j \mid \gamma_2^{\Lie}(\Lieg)} \}_{j \in J}$ is linearly independent.

\begin{proposition}
We have the following inclusion:
\[
	A \otimes \Psi + {\End}_{\K}(A) \otimes {\Der}_z^{\Lie}(\Lieg) \subseteq \Gamma^{\Lie}(A \otimes \Lieg)
\]
\end{proposition}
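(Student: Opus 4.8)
The plan is to show that each summand on the left-hand side lands in $\Gamma^{\Lie}(A \otimes \Lieg)$ separately, and then invoke the fact that $\Gamma^{\Lie}(A \otimes \Lieg)$ is a subspace (it is even a subalgebra of $\End_{\K}(A \otimes \Lieg)$) to conclude the sum is contained there. First I would handle $A \otimes \Psi$: since $A$ is unital, $\Gamma(A) \cong A$, so an element of $A$ corresponds to an element of $\Gamma(A)$, and $\Psi \subseteq \Gamma^{\Lie}(\Lieg)$ by construction. Hence $A \otimes \Psi \subseteq \Gamma(A) \otimes \Gamma^{\Lie}(\Lieg) \subseteq \Gamma^{\Lie}(A \otimes \Lieg)$ by Lemma~\ref{4.1}. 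This step is essentially immediate given the identification $\Gamma(A) \cong A$ recorded just before the statement.

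The substantive part is the second summand $\End_{\K}(A) \otimes \Der_z^{\Lie}(\Lieg)$. Here I would take $h \in \End_{\K}(A)$ and $e \in \Der_z^{\Lie}(\Lieg)$ and verify directly that $h \widetilde{\otimes} e \in \Gamma^{\Lie}(A \otimes \Lieg)$, that is, that
\[
h \widetilde{\otimes} e\big([a_1 \otimes g_1, a_2 \otimes g_2]_{\Lie}\big) = \big[h \widetilde{\otimes} e(a_1 \otimes g_1), a_2 \otimes g_2\big]_{\Lie} = \big[a_1 \otimes g_1, h \widetilde{\otimes} e(a_2 \otimes g_2)\big]_{\Lie}.
\]
The point is that since $e$ is a $\Lie$-central derivation, $e(\Lieg) \subseteq \ZLie(\Lieg)$ and moreover $e$ annihilates $\gamma_2^{\Lie}(\Lieg)$; consequently $[e(g_1), g_2]_{\Lie} = 0 = [g_1, e(g_2)]_{\Lie}$ for all $g_1, g_2 \in \Lieg$, and $e([g_1,g_2]_{\Lie}) = 0$. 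Therefore all three terms in the displayed chain of equalities vanish: the left side because $e$ kills $\gamma_2^{\Lie}(\Lieg)$, and each right side because $e(g_i) \in \ZLie(\Lieg)$ forces the $\Lie$-bracket to be zero after applying $h$ to the associative part. So $h \widetilde{\otimes} e$ trivially satisfies the $\Lie$-centroid identities, all sides being $0$.

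Finally I would assemble the two inclusions: $A \otimes \Psi \subseteq \Gamma^{\Lie}(A \otimes \Lieg)$ and $\End_{\K}(A) \otimes \Der_z^{\Lie}(\Lieg) \subseteq \Gamma^{\Lie}(A \otimes \Lieg)$, and since $\Gamma^{\Lie}(A \otimes \Lieg)$ is closed under addition, the sum of the two subspaces is contained in it as well. I do not anticipate a serious obstacle here; the only mild subtlety is making sure the decomposition of a general element of $[a_1 \otimes g_1, a_2 \otimes g_2]_{\Lie}$ into $(a_1a_2)\otimes[g_1,g_2] + (a_2a_1)\otimes[g_2,g_1]$ is handled correctly (as in the proof of Lemma~\ref{4.1}), and observing that $\gamma_2^{\Lie}(A\otimes\Lieg)$ is spanned by elements whose $\Lieg$-component lies in $\gamma_2^{\Lie}(\Lieg)$, so that a map of the form $h \widetilde{\otimes} e$ with $e$ annihilating $\gamma_2^{\Lie}(\Lieg)$ indeed annihilates $\gamma_2^{\Lie}(A \otimes \Lieg)$.
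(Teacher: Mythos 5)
Your proposal is correct and follows essentially the same route as the paper: the $A \otimes \Psi$ summand is absorbed via $\Gamma(A) \cong A$ and Lemma~\ref{4.1}, and for $\End_{\K}(A) \otimes \Der_z^{\Lie}(\Lieg)$ one checks directly that all three expressions in the $\Lie$-centroid identity vanish because the image of a $\Lie$-central derivation lies in $\ZLie(\Lieg)$ and such a derivation annihilates $\gamma_2^{\Lie}(\Lieg)$. The only cosmetic difference is that the paper derives $\varphi([g_1,g_2]_{\Lie})=0$ by expanding the derivation identity rather than quoting the annihilation of $\gamma_2^{\Lie}(\Lieg)$, which is the same fact.
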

\begin{proof}
We just need to show that ${\End}_{\K}(A) \otimes {\Der}_z^{\Lie}(\Lieg) \subseteq \Gamma^{\Lie}(A \otimes \Lieg)$, since the other part is implied by Lemma~\ref{4.1}.
For any $f \in {\End}_{\K}(A)$, and $\varphi \in {\Der}_z^{\Lie}(\Lieg)$, we have:
\begin{align*}
f \widetilde{\otimes} \varphi ([a_1 \otimes g_1, a_2 \otimes g_2]_{\Lie}) &= f(a_1 a_2) \otimes \varphi([g_1, g_2]_{\Lie})\\
&= f(a_1 a_2) \otimes ([\varphi(g_1), g_2]_{\Lie}+ [g_1, \varphi(g_2)]_{\Lie})= 0,\\
[f \widetilde{\otimes} \varphi (a_1 \otimes g_1), a_2 \otimes g_2]_{\Lie} &= f(a_1) a_2 \otimes [\varphi(g_1), g_2]_{\Lie} = 0,\\
[ a_1 \otimes g_1, f \widetilde{\otimes} \varphi (a_2 \otimes g_2)]_{\Lie} &= a_1 f(a_2) \otimes [g_1, \varphi(g_2)]_{\Lie} = 0.
\end{align*}
\end{proof}

\begin{theorem} \label{4.4}
Let $\Lieg$ be a Leibniz algebra such that $\gamma_2^{\Lie}(\Lieg) \neq 0$ and $\Gamma^{\Lie}(\Lieg) = \K. \id$, then $\Gamma^{\Lie}(A \otimes \Lieg) = \Gamma(A) \otimes \Gamma^{\Lie}(\Lieg) \cong A$.
\end{theorem}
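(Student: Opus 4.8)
The plan is to prove both the equality $\Gamma^{\Lie}(A \otimes \Lieg) = \Gamma(A) \otimes \Gamma^{\Lie}(\Lieg)$ and the isomorphism with $A$. The containment $\supseteq$ is already Lemma~\ref{4.1}, and since $\Gamma^{\Lie}(\Lieg) = \K.\id$ and $\Gamma(A) \cong A$ (as $A$ is unital), the right-hand side is just $A \otimes (\K.\id) \cong A$; so the whole content is the reverse inclusion $\Gamma^{\Lie}(A \otimes \Lieg) \subseteq \Gamma(A) \otimes \Gamma^{\Lie}(\Lieg)$. First I would fix a basis $\{a_i\}$ of $A$ and write an arbitrary $F \in \Gamma^{\Lie}(A \otimes \Lieg)$ in the form $F(a_i \otimes g) = \sum_k a_k \otimes \varphi_{ki}(g)$ for linear maps $\varphi_{ki} \colon \Lieg \to \Lieg$ depending bilinearly on the data; equivalently, $F = \sum_{k,i} E_{ki} \otimes \varphi_{ki}$ where $E_{ki}$ are the matrix units with respect to the chosen basis. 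The goal is to show each contributing $\varphi_{ki}$ is a scalar multiple of $\id$ and that the scalars assemble into an element of $\Gamma(A) \subseteq \End_\K(A)$.

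The key computation is to expand the two defining identities of $\Gamma^{\Lie}(A \otimes \Lieg)$, namely $F([a \otimes g_1, a' \otimes g_2]_{\Lie}) = [F(a \otimes g_1), a' \otimes g_2]_{\Lie} = [a \otimes g_1, F(a' \otimes g_2)]_{\Lie}$, using the explicit bracket $[a \otimes g_1, a' \otimes g_2]_{\Lie} = (aa') \otimes [g_1,g_2]_{\Lie}$ (commutativity of $A$ collapses the two terms). Writing everything in the basis $\{a_i\}$ and using the structure constants of $A$, the middle and right expressions become $\sum_{k} (a_k a') \otimes [\varphi_{k}(g_1), g_2]_{\Lie}$-type sums, and comparing the coefficient of each basis vector $a_\ell$ in $A$ produces, for each $g_1, g_2 \in \Lieg$, a family of identities of the form $[\psi(g_1), g_2]_{\Lie} = [g_1, \psi'(g_2)]_{\Lie} = (\text{something})([g_1,g_2]_{\Lie})$ where $\psi, \psi'$ are linear combinations of the $\varphi_{ki}$. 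Since $\gamma_2^{\Lie}(\Lieg) \neq 0$, these relations force the relevant combinations to land in $\Gamma^{\Lie}(\Lieg) = \K.\id$ — this is where the hypothesis $\Gamma^{\Lie}(\Lieg) = \K.\id$ does the work, pinning each such combination down to a scalar. One then reads off that $F = f \widetilde{\otimes} \id$ for a single linear map $f \colon A \to A$, and checking that $f$ lies in $\Gamma(A)$ is immediate from the surviving instance of the identity $F([\ldots]) = [F(\ldots),\ldots]$ applied to elements $a \otimes g_1$, $a' \otimes g_2$ with $[g_1, g_2]_{\Lie} \neq 0$: it yields exactly $f(aa') = f(a)a' = af(a')$.

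I expect the main obstacle to be the bookkeeping in passing from "$F$ preserves the defining identities" to "$F = f \widetilde{\otimes} \id$": one has to argue that the off-diagonal pieces (the $\varphi_{ki}$ that are not proportional to $\id$, and the components of $F$ that genuinely mix tensor factors) must vanish, and this requires choosing $g_1, g_2$ with $[g_1,g_2]_{\Lie}$ nonzero — available precisely because $\gamma_2^{\Lie}(\Lieg) \neq 0$ — and then using linear independence in $A \otimes \Lieg$ carefully, since a relation like $\sum_k a_k \otimes w_k = \sum_k a_k \otimes w_k'$ only gives $w_k = w_k'$ when the $a_k$ are a basis. A clean way to organise this is to evaluate $F$ on $1_A \otimes g$ first (using the unit of $A$), write $F(1 \otimes g) = \sum_k a_k \otimes \varphi_k(g)$, show via the identity with a second well-chosen element that each $\varphi_k \in \Gamma^{\Lie}(\Lieg) = \K.\id$, hence $F(1 \otimes g) = f(1) \otimes g$ for a fixed element $f(1) = \sum_k \lambda_k a_k$, then extend to general $a \otimes g = (a \cdot 1) \otimes g$ by the centroid identity to get $F(a \otimes g) = \big(\text{mult}_a(f(1))\big) \otimes g$, defining $f \in \End_\K(A)$ with $f(a) = a \cdot f(1)$; finally the centroid identity for $A$ follows and $\Gamma(A) \cong A$ identifies $f$ with the element $f(1) \in A$, completing both the equality and the isomorphism.
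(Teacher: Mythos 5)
Your proposal is correct and follows essentially the same route as the paper: expand $\varphi(a\otimes g)=\sum_i a_i\otimes\eta_i(a,g)$ over a basis of $A$, bracket against $1\otimes g_2$ to show each component lies in $\Gamma^{\Lie}(\Lieg)=\K.\id$, assemble the scalars into a map $\rho\colon A\to A$, and use $\gamma_2^{\Lie}(\Lieg)\neq 0$ to force $\rho(a)=\rho(1)a$, identifying $\varphi$ with $\rho(1)\otimes\id$. The "clean way to organise" you describe at the end is precisely the paper's argument.
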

\begin{proof}
By Lemma~\ref{4.1} it is enough to prove that $\Gamma^{\Lie}(A \otimes \Lieg) \subseteq \Gamma(A) \otimes \Gamma^{\Lie}(\Lieg) \cong A$.
Assume that $\{ A_i \}$ is a basis of $A$. Then for any $\varphi \in \Gamma^{\Lie}(A \otimes \Lieg), a \in A$, there exists a set of linear transformations $\{ \eta_i(a, -) \colon {\Lieg} \to {\Lieg} \}$ such that for all $g \in {\Lieg}$,
\begin{equation} \label{eta}
	\varphi(a \otimes g) = \sum_i A_i \otimes \eta_i(a, g)
\end{equation}
where the sum has only a finite number of non-zero summands.

We claim that $\eta_i(a, -)$ are elements of $\Gamma^{\Lie}(\Lieg)$, for all $i$. Indeed, on one hand
\begin{align*}
\varphi([a_1 \otimes g_1, 1 \otimes g_2]_{\Lie}) &= [\varphi(a_1 \otimes g_1), 1 \otimes g_2]_{\Lie} \\
{}&= \left[ \sum_i A_i \otimes \eta_i(a_1, g_1), 1 \otimes g_2 \right]_{\Lie}\\
{}&= \sum_i A_i \otimes \left[ \eta_i(a_1, g_1), g_2 \right]_{\Lie},
\end{align*}
 and on the other hand
\begin{align*}
\varphi([a_1 \otimes g_1, 1 \otimes g_2]_{\Lie}) &= \varphi(a_1 \otimes [g_1, g_2]_{\Lie}) \\
{}&= \sum_i A_i \otimes \eta_i(a_1, [g_1, g_2]_{\Lie}).
\end{align*}
Therefore $\left[ \eta_i(a_1, g_1), g_2 \right]_{\Lie} = \eta_i(a_1, [g_1, g_2]_{\Lie})$. A similar computation shows the other equality.

Since $\Gamma^{\Lie}(\Lieg) = \K. \id$, then $\eta_i(a,g)=\lambda_i(a).g$, $\lambda_i(a) \in \K$. By identity~\eqref{eta}, for any~$\varphi \in \Gamma^{\Lie}(A \otimes \Lieg)$, $a \in A$, there exists a finite set of indices $J$ such that if $i \notin J$, then $\eta_i(a, -)=0$. Thus we have that for all $g \in {\Lieg}$
\[
	\varphi(a \otimes g) = \sum_i A_i \otimes \eta_i(a, g) = \sum_{i \in J} A_i \otimes \lambda_i(a).g = \sum_{i \in J} \lambda_i(a) (A_i \otimes g).
\]
 Let $\rho \colon A \to A$ be the map defined by $\rho(a) = \sum_{i \in J} \lambda_i(a) A_i$, for all $a \in A$. Then,~$\varphi(a \otimes g) = \rho(a) \otimes g$.
Since
\begin{align*}
\varphi([a_1 \otimes g_1, 1 \otimes g_2]_{\Lie}) &= \varphi(a_1 \otimes [g_1, g_2]_{\Lie}) = \rho(a_1) \otimes [g_1, g_2]_{\Lie},\\
[a_1 \otimes g_1, \varphi(1 \otimes g_2)]_{\Lie} &= [a_1 \otimes g_1, \rho(1) \otimes g_2]_{\Lie} = \rho(1) a_1 \otimes [g_1, g_2]_{\Lie},
\end{align*}
then we get $\rho(a_1) \otimes [g_1, g_2]_{\Lie} = \rho(1) a_1 \otimes [g_1, g_2]_{\Lie}$. Moreover, since $\gamma_2^{\Lie}({\Lieg}) \neq 0$, we have that $\rho(a_1) = \rho(1) a_1$, $a_1 \in A$. Consequently,
\[
\varphi(a \otimes g) = (\rho(1) a) \otimes g = (\rho(1) \otimes \id_{\Lieg})(a \otimes g),
\]
for all $a \in A, g \in {\Lieg}$. This means that $\varphi$ belongs to $\Gamma(A) \otimes \K.\id = \Gamma(A) \otimes \Gamma^{\Lie}(\Lieg)$.
\end{proof}

\begin{example}
Example~\ref{derivation}~(b) satisfies the requirements of Theorem~\ref{4.4}.
\end{example}

\begin{remark}
Theorem~\ref{4.4} does not hold if $A$ is not a unital algebra, as the following example shows.
Let $\Lieg$ be the complex five-dimensional Leibniz algebra with the basis $\{ a_1, a_2, a_3, a_4, a_5 \}$ and bracket operation
\begin{align*}
[a_2, a_1] &= -a_3, & [a_1, a_2] &= a_3, & [a_1, a_3] &= -2a_1,\\
[a_3, a_1] &= 2a_1,& [a_3, a_2] &= -2a_2,& [a_2, a_3] &= 2a_2,\\
[a_5, a_1] &= a_4,& [a_4, a_2] &= a_5,& [a_4, a_3] &= -a_4,\\
&&&& [a_5, a_3] &= a_5,
\end{align*}
and zero elsewhere \cite[Example 3.2]{Om}. It can be checked that $\gamma_2^{\Lie}({\Lieg}) = \langle \{ a_4, a_5 \} \rangle$ and $\Gamma^{\Lie}(\Lieg) = \K. \id$.

Let $\mathbb{C}[t]$ be the polynomial ring in the variable $t$ with coefficients in the field of the complex numbers. Let $B = t^m \mathbb{C}[t]$, $m > 0$, be the subalgebra of $\mathbb{C}[t]$, which does not contain the unit element. It is easy to show that $f(t).\id_B \otimes \id_{\Lieg} \in \Gamma^{\Lie}(B \otimes \Lieg)$, for any $f(t) \in \mathbb{C}[t]$. This fact implies that $\Gamma^{\Lie}(B \otimes \Lieg) \ncong B$, since the elements~$f(t).\id_B \otimes \id_{\Lieg}$ are identified with $f(t) \in \mathbb{C}[t]$, which is not an element of B.
\end{remark}

\begin{definition}
A linear map $\phi \in \Gamma^{\Lie}(A \otimes \Lieg)$ is said to have a \emph{finite $\Lieg$-image} if, for any $a \in A$ there exist finitely many $a_1, \dots, a_n \in A$ such that
\[
	\phi(\K a \otimes {\Lieg}) \subseteq (\K a_1 \otimes {\Lieg}) + \dots + (\K a_n \otimes {\Lieg}).
\]
\end{definition}

From now on we denote by $\Gamma(A) \widetilde{\otimes} \Gamma^{\Lie}(\Lieg)$ the $\K$-span of all endomorphisms $f \widetilde{\otimes} \varphi$ of~${A \otimes {\Lieg}}$. Due to Lemma~\ref{4.1}, we know that $\Gamma(A) \widetilde{\otimes} \Gamma^{\Lie}(\Lieg) \subseteq \Gamma^{\Lie}(A \otimes {\Lieg})$. Moreover,
\[
	\Gamma(A) \widetilde{\otimes} \Gamma^{\Lie}(\Lieg) \subseteq \{ \phi \in \Gamma^{\Lie}(A \otimes \Lieg) \mid \phi ~\text{has finite {\Lieg}-image} \}.
\]

\begin{lemma} \label{4.8}
Let $\{ a_i \}_{i \in I}$ be a basis of $A$ and $\varphi \in \Gamma^{\Lie}(A \otimes \Lieg)$. Let $\varphi_i \colon {\Lieg} \to {\Lieg}$ be the linear maps defined by $\varphi(1 \otimes g)= \sum_{i \in I} a_i \otimes \varphi_i(g)$. Then $\varphi_i \in \Gamma^{\Lie}(\Lieg)$.
\end{lemma}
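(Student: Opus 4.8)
The plan is to mimic the argument already used in the proof of Theorem~\ref{4.4}, but now without assuming $\Gamma^{\Lie}(\Lieg)=\K.\id$; the point is simply that the defining identities of $\Gamma^{\Lie}(A\otimes\Lieg)$, when tested against elements of the form $1\otimes g_2$, localise to each coordinate $\varphi_i$. First I would fix the basis $\{a_i\}_{i\in I}$ of $A$ and, using that $A$ is unital with unit $1$, write $1=\sum_i c_i a_i$ and also record the structure constants $a_i a_j=\sum_k \mu_{ij}^k a_k$. For each $g\in\Lieg$ the element $\varphi(1\otimes g)\in A\otimes\Lieg$ has a unique expansion $\varphi(1\otimes g)=\sum_{i\in I} a_i\otimes\varphi_i(g)$ with only finitely many nonzero terms, and linearity of $\varphi$ in $g$ forces each $\varphi_i\colon\Lieg\to\Lieg$ to be linear. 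So the maps $\varphi_i$ are well defined; it remains to check the two centroid identities $\varphi_i([g_1,g_2]_{\Lie})=[\varphi_i(g_1),g_2]_{\Lie}=[g_1,\varphi_i(g_2)]_{\Lie}$.

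The key step is to compute $\varphi([1\otimes g_1,\,1\otimes g_2]_{\Lie})$ in two ways. On the one hand $[1\otimes g_1,1\otimes g_2]_{\Lie}=1\otimes[g_1,g_2]_{\Lie}$, so applying $\varphi$ gives $\sum_i a_i\otimes\varphi_i([g_1,g_2]_{\Lie})$. On the other hand, since $\varphi\in\Gamma^{\Lie}(A\otimes\Lieg)$, the same element equals $[\varphi(1\otimes g_1),\,1\otimes g_2]_{\Lie}=\bigl[\sum_i a_i\otimes\varphi_i(g_1),\,1\otimes g_2\bigr]_{\Lie}=\sum_i (a_i\cdot 1)\otimes[\varphi_i(g_1),g_2]_{\Lie}=\sum_i a_i\otimes[\varphi_i(g_1),g_2]_{\Lie}$. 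Comparing coefficients of the basis vectors $a_i$ (using that $A\otimes\Lieg=\bigoplus_i a_i\otimes\Lieg$ as a vector space) yields $\varphi_i([g_1,g_2]_{\Lie})=[\varphi_i(g_1),g_2]_{\Lie}$ for every $i$. Running the computation with $\varphi$ placed on the second slot, i.e. comparing $\varphi([1\otimes g_1,1\otimes g_2]_{\Lie})$ with $[1\otimes g_1,\varphi(1\otimes g_2)]_{\Lie}$, gives $\varphi_i([g_1,g_2]_{\Lie})=[g_1,\varphi_i(g_2)]_{\Lie}$ in exactly the same way. Hence each $\varphi_i$ satisfies all three required equalities and so $\varphi_i\in\Gamma^{\Lie}(\Lieg)$.

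The only mild subtlety — and the step I would be most careful about — is the passage from the identity in $A\otimes\Lieg$ to the coordinatewise identities: this uses that $\{a_i\}$ is a basis (so the decomposition $A\otimes\Lieg=\bigoplus_{i} a_i\otimes\Lieg$ is direct) together with the fact that $a_i\cdot 1=a_i$, which is where unitality of $A$ is genuinely used; without it the right-hand side would come out as $\sum_i\sum_j c_j\mu_{ij}^k(\dots)$ and the coefficients would not separate so cleanly. Since only finitely many $\varphi_i$ are nonzero for each $g$, but in general the index set supporting $\varphi(1\otimes g)$ may vary with $g$, I would note that linearity still guarantees each individual $\varphi_i$ is a genuine linear endomorphism of $\Lieg$ defined on all of $\Lieg$ (taking the value $0$ where it does not appear), so no finiteness hypothesis on $\varphi$ is needed for this lemma. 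This completes the argument.
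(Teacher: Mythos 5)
Your proposal is correct and follows essentially the same route as the paper's proof: expand $\varphi(1\otimes g)$ over the basis $\{a_i\}$, evaluate $\varphi\bigl([1\otimes g_1,1\otimes g_2]_{\Lie}\bigr)$ in the three possible ways using the centroid identities, and compare coefficients in the direct sum $\bigoplus_i a_i\otimes\Lieg$ to get $\varphi_i([g_1,g_2]_{\Lie})=[\varphi_i(g_1),g_2]_{\Lie}=[g_1,\varphi_i(g_2)]_{\Lie}$. The only difference is that you spell out the coefficient-comparison step and the role of unitality explicitly, which the paper leaves implicit.
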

\begin{proof}
\begin{align*}
\varphi(1 \otimes [g_1, g_2]_{\Lie}) &= \sum_{i \in I} a_i \otimes \varphi_i([g_1, g_2]_{\Lie}),\\
\varphi(1 \otimes [g_1, g_2]_{\Lie}) &= \varphi([1 \otimes g_1, 1 \otimes g_2]_{\Lie}) = [\varphi(1 \otimes g_1), 1 \otimes g_2]_{\Lie}\\
 &=[\sum_{i \in I} a_i \otimes \varphi_i(g_1), 1 \otimes g_2]_{\Lie} = \sum_{i \in I} a_i \otimes [\varphi_i(g_1), g_2]_{\Lie},\\
 \varphi(1 \otimes [g_1, g_2]_{\Lie}) &= \sum_{i \in I} a_i \otimes [g_1, \varphi_i(g_2)]_{\Lie}.
\end{align*}
Hence $\varphi_i([g_1, g_2]_{\Lie}) = [\varphi_i(g_1), g_2]_{\Lie} = [g_1, \varphi_i(g_2)]_{\Lie}$, for all $g_1, g_2 \in {\Lieg}$, and therefore~$\varphi_i \in \Gamma^{\Lie}(\Lieg)$.
\end{proof}

\begin{proposition}
	Let $\Lieg$ be  Leibniz algebra. Then,
	\begin{enumerate}
		\item[(i)] $\gamma_2^{\Lie}(A \otimes {\Lieg}) = A \otimes \gamma_2^{\Lie}(\Lieg)$.
		
		\item [(ii)]If $\gamma_2^{\Lie}(\Lieg)$ is finite generated as an $\Gamma^{\Lie}(\Lieg)$-module, then  every $\varphi \in \Gamma^{\Lie}(A \otimes  \gamma_2^{\Lie}(\Lieg))$ has finite $\gamma_2^{\Lie}(\Lieg)$-image.
		
		\item[(iii)] If $\gamma_2^{\Lie}({\Lieg}) \neq 0$ and $\Gamma^{\Lie}({\Lieg}) = \K.{\id}_{\Lieg}$, then $\varphi \in \Gamma^{\Lie}(A \otimes {\Lieg})$ has finite {\Lieg}-image.
	\end{enumerate}
\end{proposition}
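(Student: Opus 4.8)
For part~(i) I would verify the two inclusions directly, using that $\gamma_2^{\Lie}$ is spanned by $\Lie$-brackets (as is done, for instance, in the proof of Proposition~\ref{3.6}). The subalgebra $\gamma_2^{\Lie}(A\otimes\Lieg)$ is spanned by the elements $[a_1\otimes g_1, a_2\otimes g_2]_{\Lie} = (a_1a_2)\otimes[g_1,g_2]_{\Lie}$ --- here the commutativity of $A$ is what allows one to collapse $[g_1,g_2]$ and $[g_2,g_1]$ into a single $\Lie$-bracket --- whence $\gamma_2^{\Lie}(A\otimes\Lieg)\subseteq A\otimes\gamma_2^{\Lie}(\Lieg)$. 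Conversely, $A\otimes\gamma_2^{\Lie}(\Lieg)$ is spanned by the elements $a\otimes[g_1,g_2]_{\Lie}$, and since $A$ is unital one has $a\otimes[g_1,g_2]_{\Lie} = [1\otimes g_1, a\otimes g_2]_{\Lie}\in\gamma_2^{\Lie}(A\otimes\Lieg)$. This is a routine bilinearity computation and I anticipate no obstacle.

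For part~(ii) the plan is to combine part~(i) with Proposition~\ref{3.6} and Lemma~\ref{4.1}. Identifying $\gamma_2^{\Lie}(A\otimes\Lieg)$ with $A\otimes\gamma_2^{\Lie}(\Lieg)$ via part~(i), and noting that every $\varphi\in\Gamma^{\Lie}(A\otimes\Lieg)$ maps this ideal into itself since $\varphi([z_1,z_2]_{\Lie}) = [\varphi(z_1),z_2]_{\Lie}$, the task is to show such a $\varphi$ has finite $\gamma_2^{\Lie}(\Lieg)$-image on it. Fix a finite generating set $w_1,\dots,w_m$ of $\gamma_2^{\Lie}(\Lieg)$ as a $\Gamma^{\Lie}(\Lieg)$-module --- this is meaningful because $\psi([g,h]_{\Lie}) = [\psi(g),h]_{\Lie}$ shows that $\Gamma^{\Lie}(\Lieg)$ preserves $\gamma_2^{\Lie}(\Lieg)$ --- a basis $\{a_i\}_{i\in I}$ of $A$, and an element $a\in A$. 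Writing $\varphi(a\otimes w_k) = \sum_{i\in I_k}a_i\otimes v_{ik}$ with $I_k\subseteq I$ finite and $v_{ik}\in\gamma_2^{\Lie}(\Lieg)$, put $I_0 = I_1\cup\dots\cup I_m$, a finite set depending only on $a$. For any $\psi\in\Gamma^{\Lie}(\Lieg)$ the endomorphism $\id_A\widetilde{\otimes}\psi$ belongs to $\Gamma^{\Lie}(A\otimes\Lieg)$ by Lemma~\ref{4.1} (as $\id_A\in\Gamma(A)$), so by Proposition~\ref{3.6} it commutes with $\varphi$ on $\gamma_2^{\Lie}(A\otimes\Lieg) = A\otimes\gamma_2^{\Lie}(\Lieg)$; in particular $\varphi\big((\id_A\widetilde{\otimes}\psi)(a\otimes w_k)\big) = (\id_A\widetilde{\otimes}\psi)\big(\varphi(a\otimes w_k)\big)$. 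Expressing an arbitrary $w\in\gamma_2^{\Lie}(\Lieg)$ as $w = \sum_k\psi_k(w_k)$ with $\psi_k\in\Gamma^{\Lie}(\Lieg)$, this yields
\[
\varphi(a\otimes w) = \sum_{k}(\id_A\widetilde{\otimes}\psi_k)\big(\varphi(a\otimes w_k)\big)\in\sum_{i\in I_0}\K a_i\otimes\gamma_2^{\Lie}(\Lieg),
\]
the membership holding because $\id_A\widetilde{\otimes}\psi_k$ introduces no new basis vectors of $A$ and keeps the $\Lieg$-factor inside $\gamma_2^{\Lie}(\Lieg)$. As $I_0$ is finite and independent of $w$, this is exactly the asserted finite $\gamma_2^{\Lie}(\Lieg)$-image. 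The step I expect to be the main obstacle is the commutation of $\varphi$ with $\id_A\widetilde{\otimes}\psi_k$: it is only available on the ideal $\gamma_2^{\Lie}(A\otimes\Lieg)$ --- so part~(i) is genuinely needed --- and one must verify that the finite-generation hypothesis is precisely what makes the index set $I_0$ uniform in $w$.

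For part~(iii) I would simply invoke Theorem~\ref{4.4}, whose hypotheses $\gamma_2^{\Lie}(\Lieg)\neq 0$ and $\Gamma^{\Lie}(\Lieg) = \K.\id_{\Lieg}$ are exactly the ones assumed here (and $A$ is unital by the standing assumption of the section). That theorem gives $\Gamma^{\Lie}(A\otimes\Lieg) = \Gamma(A)\otimes\Gamma^{\Lie}(\Lieg) = \Gamma(A)\widetilde{\otimes}\Gamma^{\Lie}(\Lieg)$, and every element of $\Gamma(A)\widetilde{\otimes}\Gamma^{\Lie}(\Lieg)$ has finite $\Lieg$-image, as already recorded just before Lemma~\ref{4.8}. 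Equivalently, unwinding the proof of Theorem~\ref{4.4}, each $\varphi\in\Gamma^{\Lie}(A\otimes\Lieg)$ has the form $\varphi(a\otimes g) = (ba)\otimes g$ for a fixed $b\in A$, so that $\varphi(\K a\otimes\Lieg) = \K(ba)\otimes\Lieg$ is a single summand. This presents no difficulty once Theorem~\ref{4.4} is in hand.
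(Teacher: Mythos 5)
Your proposal is correct and follows essentially the same route as the paper: part (i) by a direct bilinearity computation using that $A$ is unital, part (ii) by fixing a finite generating set, writing $\varphi(a\otimes w_k)$ over finitely many basis elements of $A$, and commuting $\varphi$ with $\id_A\widetilde{\otimes}\psi_k$ on $\gamma_2^{\Lie}(A\otimes\Lieg)$ via Lemma~\ref{4.1}, statement (i) and Proposition~\ref{3.6}, and part (iii) by invoking Theorem~\ref{4.4}. Your write-up is in fact slightly more careful than the paper's (you prove both inclusions in (i) and use a separate $\psi_k$ for each generator in (ii)), but the ideas are identical.
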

\begin{proof}
	(i) For any $a \otimes g \in A \otimes \gamma_2^{\Lie}({\Lieg})$, we have
	\[
	a \otimes g = a.1 \otimes \sum_i k_i. [g_{i1}, g_{i2}]_{\Lie} = \sum_i k_i [a \otimes g_{i1}, 1 \otimes g_{i2}]_{\Lie} \in \gamma_2^{\Lie}(A \otimes {\Lieg}).
	\]
	(ii) Let $x_1, \dots, x_n \in \gamma_2^{\Lie}(\Lieg)$ such that  $\gamma_2^{\Lie}(\Lieg) = \psi_{\Lieg} x_1 + \cdots + \psi_{\Lieg} x_n$, where $\psi_{\Lieg} \in \Gamma^{\Lie}(\gamma_2^{\Lie}(\Lieg))$. Let  $\varphi \in  \Gamma^{\Lie}(A \otimes {\Lieg})$ and~$a \in A$. There exist finite families $\{ x_{ij} \}_{j \in J}$ in  $\gamma_2^{\Lie}(\Lieg)$ and $\{ a_{ij} \}_{j \in J}$ in A, such that~${\varphi(a \otimes x_i) = \sum_j a_{ij} \otimes x_{ij}}$, for $1 \leq i \leq n$. Hence, by Lemma \ref{4.1}, statement (i) and Proposition \ref{3.6}, we have
	\begin{align*}
		\varphi(a \otimes g) &=  \varphi \left( a \otimes \sum_i \psi_{\Lieg} x_i \right) = \sum_i \varphi  \left( a \otimes \psi_{\Lieg} x_i \right) \\
		{}&= \sum_i \varphi \circ \big(\id \otimes \psi_{\Lieg} \big) \left( a \otimes x_i \right) = \sum_i \big(\id \otimes \psi_{\Lieg} \big) \circ \varphi \left( a \otimes x_i \right) \\
		{}&= \sum_i \sum_j \big(\id \otimes \psi_{\Lieg} \big) \left( a_{ij} \otimes x_{ij} \right) \subseteq \sum_i \sum_j \left( a_{ij} \otimes {\Lieg} \right).
	\end{align*}
	
	(iii) Assume that $\Gamma^{\Lie}({\Lieg}) = \K.{\id}_{\Lieg}$ and $\varphi \in \Gamma^{\Lie}(A \otimes {\Lieg})$, then by Theorem~\ref{4.4} we have:
	\[
	\varphi(1 \otimes g) = \sum_{i \in I} a_i \otimes \varphi_i(g) = \sum_{i \in I} a_i \otimes (k_i \id)(g) = \sum_{i \in I} a_i \otimes (k_i g)
	\]
	Fixed $g \in {\Lieg}$, then almost all $k_i g = 0$, and almost $k_i = 0$, which in turn means that $\varphi$ has finite $\Lieg$-image.
\end{proof}

\begin{theorem}
Let $\Lieg$ be a Leibniz algebra such that $\gamma_2^{\Lie}({\Lieg}) \neq 0$, $\Gamma^{\Lie}({\Lieg}) = \K.\id$ and $R = \K[x_1, \dots, x_n]$. Then $\Gamma^{\Lie}(R \otimes {\Lieg}) = R \otimes \Gamma^{\Lie}({\Lieg})$.
\end{theorem}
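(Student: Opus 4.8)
Since $R=\K[x_1,\dots,x_n]$ is itself a unital commutative associative $\K$-algebra, Theorem~\ref{4.4} applies verbatim with $A=R$ and gives $\Gamma^{\Lie}(R\otimes\Lieg)=\Gamma(R)\,\widetilde{\otimes}\,\Gamma^{\Lie}(\Lieg)\cong R$; via the isomorphism $\sigma\colon\Gamma(R)\to R$, $\sigma(f)=f(1)$ (valid because $R$ is unital) one identifies each elementary endomorphism $f\,\widetilde{\otimes}\,\psi$ with $f(1)\otimes\psi$, acting on $R\otimes\Lieg$ by $(r\otimes\psi)(a\otimes g)=(ra)\otimes\psi(g)$, and under this identification $\Gamma(R)\,\widetilde{\otimes}\,\Gamma^{\Lie}(\Lieg)$ becomes exactly $R\otimes\Gamma^{\Lie}(\Lieg)$. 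So strictly nothing new is required; I would nonetheless give the self-contained argument below, which parallels the proof of Theorem~\ref{4.4} but isolates where the hypotheses are used.

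The inclusion $R\otimes\Gamma^{\Lie}(\Lieg)\subseteq\Gamma^{\Lie}(R\otimes\Lieg)$ is Lemma~\ref{4.1} together with $\Gamma(R)\cong R$. For the converse, fix $\varphi\in\Gamma^{\Lie}(R\otimes\Lieg)$ and the monomial basis $\{a_i\}_{i\in I}$ of $R$. By Lemma~\ref{4.8}, writing $\varphi(1\otimes g)=\sum_{i\in I}a_i\otimes\varphi_i(g)$ gives $\varphi_i\in\Gamma^{\Lie}(\Lieg)=\K.\id$, say $\varphi_i=k_i\id$. Choosing any $g_0\in\Lieg$ with $g_0\neq 0$ — one exists because $\gamma_2^{\Lie}(\Lieg)\neq 0$ forces $\Lieg\neq 0$ — the sum $\sum_{i\in I}k_i(a_i\otimes g_0)$ is an element of $R\otimes\Lieg$ and hence finite, so $\{i\in I\mid k_i\neq 0\}$ is finite and $r_0:=\sum_{i\in I}k_i a_i$ is a well-defined element of $R$; consequently $\varphi(1\otimes g)=r_0\otimes g$ for every $g\in\Lieg$.

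To promote this to arbitrary elements, expand $\varphi(a\otimes g)=\sum_{i\in I}a_i\otimes\eta_i(a,g)$; evaluating $\varphi([a\otimes g_1,1\otimes g_2]_{\Lie})$ through the two descriptions $[\varphi(a\otimes g_1),1\otimes g_2]_{\Lie}$ and $\varphi(a\otimes[g_1,g_2]_{\Lie})$ and comparing coefficients shows, exactly as in the proof of Theorem~\ref{4.4}, that each $\eta_i(a,-)$ lies in $\Gamma^{\Lie}(\Lieg)=\K.\id$; evaluating again at the nonzero $g_0$ yields $\varphi(a\otimes g)=\rho(a)\otimes g$ for a well-defined map $\rho\colon R\to R$. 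Finally, comparing $\varphi(a\otimes[g_1,g_2]_{\Lie})=\rho(a)\otimes[g_1,g_2]_{\Lie}$ with $[a\otimes g_1,\varphi(1\otimes g_2)]_{\Lie}=(r_0 a)\otimes[g_1,g_2]_{\Lie}$ and choosing $g_1,g_2$ with $[g_1,g_2]_{\Lie}\neq 0$ (possible since $\gamma_2^{\Lie}(\Lieg)\neq 0$) forces $\rho(a)=r_0 a$, so $\varphi=r_0\otimes\id\in R\otimes\Gamma^{\Lie}(\Lieg)$. The only delicate point is the finiteness of these expansions — the ``finite $\Lieg$-image'' phenomenon — but here it comes for free, since evaluating at a single nonzero vector of $\Lieg$ already truncates the sums, and the existence of such a vector (as well as the cancellation of $[g_1,g_2]_{\Lie}$ at the last step) is exactly what $\gamma_2^{\Lie}(\Lieg)\neq 0$ provides. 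The polynomial structure of $R$ enters only through unitality; for a non-unital algebra such as $t^m\K[t]$ the statement fails, as the preceding remark shows.
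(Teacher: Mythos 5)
Your proposal is correct and, in its self-contained part, follows essentially the same route as the paper: expand $\varphi(a\otimes g)$ over a basis of $R$, show each coefficient map lies in $\Gamma^{\Lie}(\Lieg)=\K.\id$ by comparing $\varphi([a\otimes g_1,1\otimes g_2]_{\Lie})$ with $[\varphi(a\otimes g_1),1\otimes g_2]_{\Lie}$, obtain finiteness by evaluating at a nonzero vector, and force $\rho(a)=r_0a$ using some $[g_1,g_2]_{\Lie}\neq 0$. Your preliminary observation is also sound: since $R$ is unital, commutative and associative, the statement is literally the special case $A=R$ of Theorem~\ref{4.4} combined with $\Gamma(R)\cong R$, so the paper's separate proof is, strictly speaking, a repetition of that argument.
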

\begin{proof}
The inclusion $R \otimes \Gamma^{\Lie}({\Lieg}) \subseteq \Gamma^{\Lie}(R \otimes {\Lieg})$ is provided by Lemma~\ref{4.1}.

Let us consider $\{ m_i \}$ a basis of $R$, $\varphi \in \Gamma^{\Lie}(R \otimes {\Lieg})$, a polynomial $p \in R$ and~$g_1, g_2 \in {\Lieg}$. Let us denote by $\eta_i(p, -)$ the suitable maps in ${\End}_{\K}({\Lieg})$ such that~$\varphi(p \otimes g) = \sum_i m_i \otimes \eta_i(p, g)$, as in~\eqref{eta}.
Then,
\begin{align*}
\varphi \left( [p \otimes g_1, 1 \otimes g_2]_{\Lie} \right)&= \left[\varphi \left( p \otimes g_1 \right), 1 \otimes g_2 \right]_{\Lie} = \left[\sum_i m_i \otimes \eta_i(p,g_1), 1 \otimes g_2 \right]_{\Lie}\\
{}&= \sum_i m_i \otimes \left[\eta_i(p,g_1), g_2 \right]_{\Lie},
\end{align*}
but
\[
	\varphi \left( [p \otimes g_1, 1 \otimes g_2]_{\Lie} \right) = \varphi \left( p \otimes [g_1, g_2]_{\Lie} \right) = \sum_i m_i \otimes \eta_i(p, [g_1, g_2]_{\Lie}).
\]
Consequently, for every $i$ and $p$, we have $\left[\eta_i(p,g_1), g_2 \right]_{\Lie} = \eta_i(p, [g_1, g_2]_{\Lie})$. With a similar computation, we have that \[
	\left[\eta_i(p,g_1), g_2 \right]_{\Lie} = [g_1, \eta_i(p, g_2)]_{\Lie},
\]
i.e., $\eta_i(p, -) \in \Gamma^{\Lie}({\Lieg}) = \K.\id$. Therefore, $\eta_i(p, g) = k_i(p)g$, for all $g \in {\Lieg}$ and suitable scalars $k_i(p)$. Hence, $\varphi(p \otimes g) = \sum_i m_i \otimes k_i(p)g = \sum_i k_i(p) m_i \otimes g$, which is an element in $R \otimes {\Lieg}$ for every $p$, so $k_i(p)$ is non-zero for a finite number of $i$ (it is enough to take any $g \in {\Lieg}$, $g \neq 0$, to see this), that is
\[
	\varphi(p \otimes g) = \sum_i^n m_i \otimes k_i(p) g = \sum_i^n k_i(p) m_i \otimes g.
\]
Then the map $\rho \colon p \mapsto \displaystyle \sum_i^n k_i(p) m_i$ is well-defined. Thus we have $\varphi(p \otimes g) = \rho(p) \otimes p$, and hence,
\begin{align*}
	[\varphi(p \otimes g), 1 \otimes g']_{\Lie} &= [\rho(p) \otimes g, 1 \otimes g']_{\Lie} = \rho(p) \otimes [g, g']_{\Lie}, \\
	[p \otimes g, \varphi(1 \otimes g')]_{\Lie} &= [p \otimes g, \rho(1) \otimes g']_{\Lie} = p \rho(1) \otimes [g, g']_{\Lie}.
\end{align*}
Choosing $g, g' \in {\Lieg}$ such that $[g, g']_{\Lie} \neq 0$, we conclude that ${\rho(p)=p \rho(1)}$, for all~$p \in R$. This means that $\rho$ is determined by the action on $1$, and therefore~$\rho \in R$.
Thus,
\[
\varphi(p \otimes g) = \rho(p) \otimes g = (\rho \otimes \id)(p \otimes g),
\]
which means exactly that $\varphi \in R \otimes \id \cong R \otimes \Gamma^{\Lie}({\Lieg})$.
\end{proof}



\end{document}